\newtheorem{theorem}{Theorem}[section]
\newtheorem{lemma}[theorem]{Lemma}
\newcommand{\n}{\noindent}
\newtheorem{thm}{Theorem}[section]
\newtheorem{cor}[thm]{Corollary}
\def\P{{\mathcal P}}
\newenvironment{proof}{{\bf Proof}.}{\rule{3mm}{3mm}}
\newcommand{\Rmnum}[1]{\expandafter\@slowromancap\romannumeral #1@}
\title{ Decompositions of  graphs of nonnegative characteristic with  some forbidden subgraphs}
\author{Lin Niu and Xiangwen Li\thanks{Supported in part by  NSFC (12031018)}\\
 School of Mathematics $\&$ Statistics\\
Central China Normal University, Wuhan 430079, China
}
\date{}
\begin{document}
\maketitle

\begin{abstract}
 A {\em  $(d,h)$-decomposition} of a graph $G$ is an order pair $(D,H)$ such that $H$ is a subgraph of $G$ where $H$ has the maximum degree at most $h$ and $D$ is an acyclic orientation of $G-E(H)$ of maximum out-degree at most $d$. A graph $G$ is {\em  $(d, h)$-decomposable} if $G$ has a  $(d,h)$-decomposition. Let $G$ be a graph embeddable in a surface of nonnegative characteristic.
In this paper, we prove the following results. (1) If $G$ has no  chord $5$-cycles or no chord $6$-cycles or no chord $7$-cycles and no adjacent $4$-cycles, then $G$  is $(3,1)$-decomposable, which generalizes the results of Chen, Zhu and Wang [Comput. Math. Appl, 56 (2008) 2073--2078] and the results of Zhang [Comment. Math. Univ. Carolin,  54(3) (2013) 339--344]. (2) If $G$ has no $i$-cycles nor $j$-cycles for any subset $\{i,j\}\subseteq \{3,4,6\}$  is $(2,1)$-decomposable, which generalizes the  results of Dong and Xu [Discrete  Math. Alg. and Appl., 1(2) (2009), 291--297].
\end{abstract}

\section{Introduction}

Graphs considered here are finite and simple. A graph  is {\em $d$-generate} if every subgraph has a vertex of degree at most $d$.  For two integers $d, h\in \mathbb{N}$, a {\em  $(d, h)$-decomposition} of $G$ is a pair $(H_1, H_2)$ such that $H_2$ is a subgraph of $G$ of maximum degree at most $h$ and $H_1$ is $d$-degenerate.
A graph $G$ is {\em  $(d, h)$-decomposable} if $G$ has a $(d, h)$-decomposition.
Decomposing a graph into subgraphs with simple structure is a fundamental problem in graph theory. The classical Theorem of Tutte~\cite{Tu} and, independent by,  Nash-Williams~\cite{NW}  provides a necessary and sufficient condition for which a graph can be decomposed into forests. A proper coloring of $G$ is  a decomposition of $G$ into independent sets. The problems of decomposing a graph $G$ into start forests, linear forests and some others are studied widely in the literature.

A {\em proper $k$-coloring} is a mapping $\varphi: V(G)\rightarrow \{1,2,\ldots,k\}$ such that $\varphi(u)\neq \varphi(v)$ where $uv\in E(G)$. The {\em chromatic number}, denoted by $\chi (G)$, of $G$ is the minimum $k$ such that $G$ is $k$-colorable.  A {\em $d$-defective $k$-coloring} of $G$ is a mapping $\varphi: V(G)\rightarrow \{1,2,\ldots,k\}$ such that for each vertex $v\in V(G)$, $v$ has at most $d$ neighbors of the same color as itself.
A {\em $k$-list assignment} of $G$ is a function $L$ that assigns a list $L(v)$ of colors to each vertex $v \in V(G)$ where $|L(v)|=k$. A {\em $d$-defective $L$-coloring} is a mapping $\varphi$ that assigns a color $\varphi(v) \in L(v)$ to
each vertex $v \in V(G)$ such that $v$ has at most $d$ neighbors of the same color as itself. A graph $G$ is {\em $d$-defective $k$-choosable} if there exists an $(L, d)$-coloring for every list assignment $L$ with $|L(v)| = k$ for all $v \in V(G)$. A graph is
 0-defective $k$-choosable if and only if it is  $k$-choosabe.  The {\em choosable number}, denoted by $ch(G)$, of $G$ is the minimum $k$ such that $G$ is $k$-choosable.

 The {\em Alon-Tarsi number} of $G$, denoted by $AT(G)$, was defined by Jensen and Toft \cite{Jen}.
 It follows from the Alon-Tarsi Theorem \cite{Alo1} that $ch(G)\leq AT(G)$ for any graph $G$. It is proved that the difference $AT(G)-ch(G)$ can be arbitrarily large.
 DP-coloring was introduced by Dvo\v{r}\'{a}k and Postle \cite{DP} as a generation of list coloring.  Clearly, $ch(G)\leq \chi_{DP}(G)$, where  $\chi_{DP}(G)$ is the DP-chromatic number of a graph $G$. A painting coloring was introduced by Schauz~\cite{US14} and it is proved that $ch(G) \leq \chi_P(G)$ for any graph $G$, where $\chi_P(G)$ is the paint number of $G$.

It is well-known that a graph $H_1$  has an acyclic orientation $D$ with $\Delta_D^+\le d$ if and only if $H_1$ is $d$-degenerate, where $\Delta_D^+$ is the maximum degree $D$.
 If $G$ is  $d$-degenerate, then each of choosable number $ch(G)$, Alon-Tarsi number $AT(G)$, paint number $\chi_P(G)$ and DP-chromatic number $\chi_{DP}$ is at most $d+1$. This implies that if $G$ is $(d,h)$-decomposable, then there is a subgraph $H$ of $G$ where $\Delta_H\le h$ such that $G-E(H)$ is $h$-defective-$(d+1)$-choosable, $(d+1)$-DP-colorable and $AT(G-E(H))\leq d+1$.

Defective coloring of graphs was considered by Cowen, Cowen and Woodall \cite{Cow} who proved that every planar graph is 2-defective 3-colorable, which was improved by Eaton and Hull \cite{EH}, independently, \v{S}krekovski~\cite{S}, who proved that every planar graph is 2-defective 3-choosable.  Cushing and Kierstead \cite{Cush} proved that every planar graph is 1-defective 4-choosable. Grytczuk and Zhu \cite{Gry} strengthen the result and  proved that every planar graph $G$ has a matching $M$ such that $AT(G-M)\leq4$.
Lih, Song, Wang and Zhang \cite{L} proved that  every planar  graph $G$ without $4$-cycles and $l$-cycles for some $l\in\{5,6,7\}$ is 1-defective 3-choosable. Dong and Xu \cite{Xu} showed that such result is also true for some $l\in\{8, 9\}$.
Lu and Zhu  \cite{LZ20} proved that   every planar graph without $4$- and $l$-cycles $G$, where $l=5, 6, 7$,  has a matching $M$ such that $G-M$ is  $AT(G-M)\le3$.  Gon\c{c}alves \cite{Go09} proved that every planar graph is $(3, 4)$-decomposable. Zhu \cite{Zhu00} proved that every planar graph is $(2, 8)$-decomposable.
Recently,
Li, Lu,  Wang and  Zhu \cite{Zhu} improve  this result and prove that for $l\in\{5,6,7,8,9\}$, every planar graph without $4$- and $l$-cycles is $(2,1)$-decomposable. Cho, Choi, Kim, Park, Shan and Zhu \cite{Zhu21} prove that every planar graph is $(4, 1)$-, $(3,2)$-, $(2, 6)$-decomposable and that there are  planar graphs which are not $(2,3)$-decomposable and there are also planar graphs which are not $(1, h)$-decomposable.

We are interested in  decompositions of  graphs of nonnegative characteristic in this paper.  The characteristic of a surface $\Sigma$ is defined to be $|V(G)|-|E(G)|+|F(G)|$ for any graph $G$ which is 2-cell embedded in $\Sigma$. All the surfaces of nonnegative characteristic are the Euclidean plane, the projective plane, the torus and the Klein bottle.  A graph of nonnegative characteristic means that it  can be embedded on a surface of nonnegative characteristic.  Throughout this paper, a graph of nonnegative characteristic is called a {\em NC-graph}. In this paper, we prove the following results.

 \begin{thm}\label{th0}
A NC-graph $G$ is $(3,1)$-decomposable if one of the following hold:

 (1) $G$ has no chord 5-cycles.

 (2) $G$ has no chord $6$-cycles.

 (3) $G$ has no  chord 7- nor adjacent 4-cycles.
\end{thm}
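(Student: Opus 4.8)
The plan is to argue by contradiction via a minimal counterexample combined with discharging. First I would record the convenient reformulation already noted in the introduction: a graph is $d$-degenerate if and only if it has an acyclic orientation of maximum out-degree at most $d$, so proving that $G$ is $(3,1)$-decomposable amounts to producing a matching $M\subseteq E(G)$ for which $G-M$ is $3$-degenerate, i.e. $G-M$ admits a vertex ordering in which every vertex has at most three earlier neighbours. Let $G$ be a counterexample minimizing $|V(G)|+|E(G)|$. I would then assemble a list of \emph{reducible configurations} whose absence is forced by minimality and whose presence is forced by the discharging, the contradiction being that both cannot hold.

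The engine for reducibility is a deletion-and-extension argument. If $\delta(G)\le 3$, delete a vertex $v$ of degree at most $3$; a decomposition of the smaller graph $G-v$ extends to $G$ by keeping the same matching and placing $v$ last in the ordering, where it has at most three earlier neighbours, so $\delta(G)\ge 4$. More importantly, no edge $uv$ can have both endpoints of degree at most $4$: otherwise delete both $u$ and $v$, take a decomposition $(M',\sigma')$ of $G-\{u,v\}$, set $M=M'\cup\{uv\}$ (still a matching since $u,v\notin V(G-\{u,v\})$), and order the vertices as $\sigma'$ followed by $u$ and then $v$. Removing $v$ first and $u$ second, each loses only its partner under $M$, hence has at most three earlier neighbours, while the remainder of the ordering is exactly $\sigma'$ on $(G-\{u,v\})-M'$. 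Consequently the vertices of degree $4$ form an independent set, so every triangular face is incident with at least two vertices of degree at least $5$. I expect to need several further, more delicate configurations controlling degree-$5$ vertices that are incident with many triangles, and these are the reductions whose verification is the most technical.

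The counting is driven by Euler's formula: assigning to each vertex $v$ the charge $d(v)-4$ and to each face $f$ the charge $d(f)-4$, the total charge equals $-4\chi(\Sigma)\le 0$ since $\chi(\Sigma)\ge 0$. Because $\delta(G)\ge 4$, every vertex already has nonnegative charge, and every face of length at least $4$ has nonnegative charge, so the only debtors are the triangular faces, each of charge $-1$. The three cycle hypotheses enter precisely by bounding how densely triangles cluster at a vertex: a run of $t$ consecutive triangular faces at $v$ yields a cycle through $v$ of length $t+2$ with a chord $vy$ for some second neighbour $y$, so condition (1) allows at most two consecutive triangular faces at any vertex, condition (2) at most three, and condition (3)—together with the forbidden pair of adjacent $4$-cycles—at most four. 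Using these bounds I would route charge from the large faces and from the vertices of degree at least $5$ to the incident triangular faces, aiming to leave every triangle at $0$ with no donor going negative; the exact rules, and the additional reducible configurations needed to make them balance, form the technical core.

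Summing the final charges then yields a nonnegative total, contradicting $-4\chi(\Sigma)\le 0$ whenever $\chi(\Sigma)>0$ (the plane and the projective plane). For the torus and the Klein bottle, where $\chi(\Sigma)=0$ and the total is exactly $0$, I would arrange the rules so that some element is left strictly positive whenever a triangle is present (and if no triangle is present the bound $m\le 3n$ is slack and the reductions above apply directly), again a contradiction. The main obstacle I anticipate is the discharging balance at degree-$5$ vertices: such a vertex carries only one unit of surplus yet can be incident with up to three triangular faces even under condition (1), so any uniform donation rule overspends it. Overcoming this requires the refined reductions mentioned above, forbidding the worst local patterns (for example a degree-$5$ vertex incident with three triangles in the relevant arrangement); carrying these out uniformly across the three distinct cycle hypotheses, and keeping the $\chi(\Sigma)=0$ analysis strict, is where the real work lies.
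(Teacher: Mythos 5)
Your skeleton coincides with the paper's: a minimal counterexample, the charge $\mu(x)=d(x)-4$ on vertices and faces with total $-4\chi(\Sigma)\le 0$, the deletion--extension reductions giving $\delta(G)\ge 4$ and independence of the $4$-vertices (the paper's Lemma~2.2), the translation of each cycle hypothesis into a bound on runs of consecutive $3$-faces, and the ``strict positivity somewhere'' endgame for $\chi(\Sigma)=0$. But as it stands the proposal has two genuine gaps. First, and you say so yourself, the entire technical core is deferred: the refined reducible configurations at $5$-vertices and the discharging rules that balance against them are exactly what the theorem lives on. Concretely, the paper needs that a $5$-vertex is incident with at most one $(4,5,5)$-face, and is not incident with three consecutive $3$-faces consisting of one $(4,5,5)$-face and two $(4,5,6)$-faces (its Lemma~2.3); without configurations of this strength, no choice of rules transferring charge from $5$-vertices (surplus $1$) and large faces can pay for the triangles, as you correctly anticipate. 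A sketch that names the obstacle but supplies neither the configurations nor the rules does not yet prove the statement.

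Second, your quantitative bound for hypothesis (3) is wrong in a direction that would sink the discharging even if you tried to complete it. You claim that no chord $7$-cycles together with no adjacent $4$-cycles allows up to \emph{four} consecutive $3$-faces at a vertex. In fact three consecutive $3$-faces $[v_1vv_2],[v_2vv_3],[v_3vv_4]$ already create the two $4$-cycles $v_1v_2v_3v$ and $v_2v_3v_4v$ sharing the edge $v_2v_3$, i.e.\ adjacent $4$-cycles; so hypothesis (3) forces at most \emph{two} consecutive $3$-faces, the same bound as hypothesis (1). This is precisely the paper's Lemma~2.4, and it is what lets cases (1) and (3) share one discharging argument, with only the no-chord-$6$-cycle case (2) needing the weaker bound of three consecutive $3$-faces and a correspondingly more delicate rule set. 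Under your bound of four, a $5$-vertex could carry $n_3(v)=4$ with at most one remaining face to compensate, and its single unit of charge cannot cover four triangles each demanding on the order of $\tfrac{5}{12}$ to $\tfrac12$; no repair by extra reducible configurations of the deletion--extension type is plausible there. Separately, your plan for Property (II) (``leave something positive whenever a triangle is present'') is more complicated than needed: once $\mu'\equiv 0$ is assumed, the accounting forces $G$ to be $4$-regular with only $3$- and $4$-faces, which directly contradicts the independence of $4$-vertices you already established.
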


For simplicity, we define a family $\cal G$ of NC-graphs   such that $G\in {\cal G}$ if and only if $G$ has neither chord $5$-cycles nor chord $6$-cycles nor chord $7$- and adjacent $4$-cycles. From Theorem \ref{th0}, next corollary follows immediately.

\begin{cor}
 Every graph  $G \in {\cal G}$  has a matching $M$ such that each of choice number, paint number, DP-number and Alon-Tarsi number of $G-M$ is at most 4.
\end{cor}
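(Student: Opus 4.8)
The plan is to read the corollary off directly from Theorem~\ref{th0} together with the degeneracy bounds recalled in the introduction; I do not expect any new combinatorial work to be needed, since the statement is a formal consequence of what precedes it.

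First I would fix an arbitrary $G \in {\cal G}$. By the definition of the family ${\cal G}$, such a $G$ satisfies (at least) one of the three hypotheses of Theorem~\ref{th0}, so that theorem applies and yields that $G$ is $(3,1)$-decomposable. Next I would unpack what $(3,1)$-decomposability provides. Using the definition from the introduction---equivalently, the acyclic-orientation formulation of the abstract, the two being interchangeable via the standard fact that a graph admits an acyclic orientation of maximum out-degree at most $d$ precisely when it is $d$-degenerate---there is a subgraph $H$ of $G$ with $\Delta_H \le 1$ such that $G - E(H)$ is $3$-degenerate. The one elementary observation to record is that a subgraph of maximum degree at most $1$ has no two incident edges, so $M := E(H)$ is a matching of $G$. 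This $M$ is exactly the matching claimed, and $G - M = G - E(H)$ is $3$-degenerate.

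Finally I would invoke the uniform degeneracy bounds stated in the introduction: if a graph $G'$ is $d$-degenerate then $ch(G')$, $\chi_P(G')$, $\chi_{DP}(G')$ and $AT(G')$ are each at most $d+1$. Applying this with $d=3$ and $G' = G - M$ gives all four inequalities $ch(G-M)\le 4$, $\chi_P(G-M)\le 4$, $\chi_{DP}(G-M)\le 4$ and $AT(G-M)\le 4$ simultaneously, which is the assertion of the corollary.

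Because the result is a direct corollary, I do not anticipate a genuine obstacle. The only points that require care are matching up the two equivalent definitions of a $(d,h)$-decomposition and noting that a subgraph of maximum degree at most $1$ is precisely a matching, so that the out-degree parameter $h=1$ of the decomposition translates into the matching $M$ demanded by the statement. The substantive content lives entirely in Theorem~\ref{th0}; the corollary merely repackages its conclusion through the degeneracy-to-coloring implications already recorded.
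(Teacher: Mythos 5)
Your proposal is correct and follows exactly the route the paper intends: the paper derives this corollary immediately from Theorem~\ref{th0} together with the facts recorded in the introduction that a $(d,1)$-decomposition yields a matching $M$ (a subgraph of maximum degree at most $1$) with $G-M$ $d$-degenerate, and that $d$-degeneracy bounds each of $ch$, $\chi_P$, $\chi_{DP}$ and $AT$ by $d+1$. Your careful matching-up of the two equivalent formulations of $(3,1)$-decomposability is precisely the (routine) content of the paper's ``follows immediately.''
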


A graph $G$ is {\em toroidal} if $G$ can be drawn on the torus so that the edges meet only at the vertices of the graph.
\begin{cor}
(1) (Chen, Zhu and Wang, \cite{Chen}) Every  graph of nonnegative characteristic without either chord 5-cycles or chord 6-cycles is 1-defective 4-choosable.

(2) (Zhang, \cite{Zhang})  Every toroidal graph $G$ without chord 7-cycles and adjacent 4-cycles is 1-defective 4-choosable.
\end{cor}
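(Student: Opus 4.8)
The plan is to argue by contradiction using a minimal counterexample together with a discharging analysis based on Euler's formula. First I would reformulate the goal: since a graph is $d$-degenerate exactly when it admits an acyclic orientation of maximum out-degree at most $d$, and since maximum degree at most $1$ means a matching, a $(3,1)$-decomposition is precisely a matching $M$ of $G$ such that $G-M$ is $3$-degenerate. So in each of the three cases it suffices to produce such a matching. Let $G$ be a counterexample minimizing $|V(G)|+|E(G)|$, embedded in a fixed surface $\Sigma$ of nonnegative Euler characteristic $c\geq 0$; I may assume $G$ is connected and $2$-cell embedded, so that the facial lengths behave well.

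The engine of the argument is a collection of reducible configurations proved by the \emph{delete, decompose, and extend} method. The simplest instance shows $\delta(G)\geq 4$: if $d(v)\leq 3$, a $(3,1)$-decomposition of $G-v$ (which exists by minimality) extends to $G$ by keeping the same matching and orienting all edges at $v$ outward, so $v$ is a source, the orientation stays acyclic, and $v$ has out-degree at most $3$. For degree-$4$ and degree-$5$ vertices the extension uses the matching flexibility: after deleting a small local configuration and decomposing the remainder as $(D',M')$, I add the vertices back and either place one incident edge $vu$ into the matching (possible when $u$ is $M'$-unsaturated) or reverse a single incident edge (possible when the relevant out-degree and acyclicity conditions hold). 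Carrying this out for the neighborhoods that the hypotheses forbid yields, in a minimal counterexample, that triangles cannot cluster too tightly: in case (1)/(2) a chord in a $5$-/$6$-cycle, and in case (3) a chord $7$-cycle or a pair of adjacent $4$-cycles, is exactly the obstruction that would otherwise let triangles share edges with small faces and starve the discharging.

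For the global count I assign charges $\mu(v)=d(v)-4$ to each vertex and $\mu(f)=\ell(f)-4$ to each face, where $\ell(f)$ is the facial length. By Euler's formula $|V|-|E|+|F|=c$ together with the two handshake identities, one computes $\sum_{v}\mu(v)+\sum_{f}\mu(f)=4|E|-4(|V|+|F|)=-4c\leq 0$. Since $\delta(G)\geq 4$, every vertex already carries nonnegative charge, so the entire deficit sits on the triangles, each carrying $-1$. I would then design discharging rules that send charge to each triangle from its incident vertices of degree at least $5$ and from adjacent faces of length at least $4$, using the reducibility lemmas and the forbidden-cycle hypotheses to guarantee that every triangle receives at least $1$ while no vertex or large face drops below $0$. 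The three cases demand three slightly different rule sets, but all follow the same template.

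The main obstacle, and where I would spend the most effort, is twofold. First, the reducibility lemmas for degree-$4$ and degree-$5$ vertices are delicate because reversing an incident edge can either create a directed cycle or overload a neighbor's out-degree budget; I must track acyclicity and the out-degree count simultaneously, which is precisely where the excluded configurations enter. Second, unlike the planar case, the surfaces of characteristic $0$ (the torus and the Klein bottle) give total charge exactly $0$ rather than strictly negative, so nonnegativity of all final charges does not by itself yield a contradiction. Here I would show that equality forces an extremely rigid structure, with essentially every vertex of degree $4$ and every face contributing zero surplus, and then verify directly that such a graph must either contain one of the already-excluded reducible configurations or admit the desired matching outright, thereby closing the $c=0$ case and completing the proof.
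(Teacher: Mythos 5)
Your proposal takes essentially the same route as the paper: there the corollary follows immediately from Theorem~\ref{th0}, which is proved exactly as you outline --- a vertex-minimal counterexample, delete--decompose--extend reducibility lemmas (minimum degree at least $4$, no adjacent $4$-vertices, restrictions on clustered $3$-faces), and discharging with charge $d(x)-4$, including your correct observation that on characteristic-$0$ surfaces nonnegativity of final charges is not enough, which the paper resolves (its Property (II)) by showing that total equality would force $G$ to be $4$-regular with only $3$- and $4$-faces, contradicting the no-adjacent-$4$-vertices lemma. The one step you leave implicit --- that a matching $M$ with $G-M$ being $3$-degenerate yields $1$-defective $4$-choosability --- is the standard degeneracy-plus-matching argument the paper records in its introduction, so no gap remains.
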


 \begin{thm}\label{th1}
A NC-graph $G$ is $(2,1)$-decomposable if one of the following hold:

  (1) $G$ has neither $3$- nor $4$-cycles.

  (2) $G$ has neither $3$- nor $6$-cycles.

  (3) $G$ has neither $4$- nor $6$-cycles.
\end{thm}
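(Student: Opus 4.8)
The plan is to argue by contradiction through a minimal counterexample together with a discharging analysis. First I recall, using the equivalence stated in the excerpt (a graph is $d$-degenerate if and only if it has an acyclic orientation of maximum out-degree at most $d$), that a $(2,1)$-decomposition of $G$ is exactly a matching $M\subseteq E(G)$ for which $G-M$ is $2$-degenerate; so it suffices to produce such a matching. I would let $G$ be a counterexample minimizing $|V(G)|+|E(G)|$, embedded $2$-cell in a surface $\Sigma$ of characteristic $c\in\{0,1,2\}$. Since the three hypotheses are handled by the same machinery, I would develop the reducible configurations and discharging in parallel, noting that each cycle restriction controls face sizes: condition (1) forces girth at least $5$ (every face has length $\ge 5$), condition (2) forbids $3$- and $6$-faces, and condition (3) forbids $4$- and $6$-faces.

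\textbf{Reducibility.} The first reduction is $\delta(G)\ge 3$: a vertex of degree at most $2$ can be deleted, a decomposition of the smaller graph taken by minimality, and the vertex reinserted at the top of the degeneracy order without using the matching. The heart of the argument is a collection of configurations around $3$-vertices. For a $3$-vertex $v$ with neighbours $a,b,c$, I delete $v$, take a matching $M'$ and a $2$-degeneracy order of $(G-v)-M'$, reinsert $v$ as the last vertex of the order, and put one edge at $v$ into the matching so that $v$ retains only two earlier neighbours. This works immediately when some neighbour of $v$ is $M'$-unsaturated; the delicate point, which I expect to be the main obstacle, is when $a,b,c$ are all saturated, where I would reroute $M'$ by an augmenting-path-type exchange or strengthen the deleted configuration (for instance deleting a suitable pair of adjacent low vertices at once) so the matching extends unconditionally. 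Carrying this out yields the working reducible configurations: bounds forbidding $3$-vertices with too many low-degree neighbours and certain light edges, with the precise list tuned to each cycle condition.

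\textbf{Discharging.} I assign to each vertex and face the charge $\mu(x)=d(x)-4$. By Euler's formula on $\Sigma$ and the handshake identities one gets $\sum_x\mu(x)=-4c\le 0$, with equality exactly when $c=0$ (torus or Klein bottle). I then design rules sending charge from faces and high-degree vertices to the $3$-vertices, using the face-size information above: under (1) every face already carries charge at least $1$, so the rules are light; under (2) the $4$-faces have charge $0$ and are harmless, while the absence of $3$- and $6$-faces forces short faces to be surrounded by large faces that can pay; under (3) the $3$-faces (charge $-1$) are the real difficulty and must be fed by incident large faces, where forbidding $4$- and $6$-cycles restricts how triangles can cluster. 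The goal is to show that once the reducible configurations are excluded, every final charge is nonnegative; then $-4c=\sum_x\mu(x)\ge 0$ forces $c=0$ and every final charge equal to $0$.

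\textbf{Closing the extremal case.} When $c=0$ and all final charges vanish, the rules pin down a rigid structure (essentially a regular graph all of whose faces have the critical length), which I would show either contains a reducible configuration or directly violates the relevant cycle hypothesis, completing the contradiction. The technical burden is thus twofold: making the matching-extension in the reducibility step unconditional, and verifying nonnegativity of the final charges across the three distinct face profiles, with case (3), where triangles are present, the most involved.
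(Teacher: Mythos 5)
Your skeleton coincides with the paper's: a minimal counterexample, the charge $\mu(x)=d(x)-4$, Euler's formula giving $\sum_x\mu(x)\le 0$ on a surface of nonnegative characteristic, and a final analysis of the extremal case in which all adjusted charges vanish. But the two steps you defer are exactly where the content lies, and the first, as you describe it, would fail. Your reduction at a $3$-vertex $v$ --- reinsert $v$ last in the degeneracy order and put one edge at $v$ into the matching --- requires an $M'$-unsaturated neighbour, and minimality gives you no control whatsoever over which $(2,1)$-decomposition of $G-v$ you receive. Your proposed repair by an augmenting-path exchange does not work: the $2$-degeneracy certificate is attached to the specific edge set $(G-v)-M'$, and rerouting the matching changes that graph with no guarantee it remains $2$-degenerate. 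Nor can single $3$-vertices be reducible at all: graphs satisfying the hypotheses contain them in abundance, and the paper accordingly keeps them, paying them charge from $5^+$-faces. The device the paper uses instead is to delete an entire configuration $X$ and place all new matching edges \emph{inside} $X$, orienting every edge between $X$ and $V\setminus X$ outward from $X$, so that saturation of outside vertices never arises; this is why only specific configurations are reducible --- adjacent $3$-vertices (Lemma~\ref{lem5}(2)), the $(3,4,3,4)$-face (Lemma~\ref{lem6}), and the seven eleven-vertex configurations of Fig.~1 (Lemma~\ref{lem7}). Your hedge about ``deleting a suitable pair of adjacent low vertices'' recovers only Lemma~\ref{lem5}(2), which alone is far from sufficient.

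The second gap is the extremal case for hypothesis (2) (no $3$- and no $6$-cycles). Your claim that the vanishing of all charges ``pins down a rigid structure'' that ``either contains a reducible configuration or directly violates the relevant cycle hypothesis'' badly understates the difficulty: the rigid structure --- every vertex of degree $3$ or $4$, every face either a $4$-face or a $(4,3,4,3,4)$-face with two bad $3$-vertices --- contradicts nothing by itself. The paper must grow, starting from one $5$-face, a long structural analysis (Claims 1--3 followed by two further case trees), repeatedly using $6$-cycle-freeness to exclude vertex identifications and invoking the seven configurations of Lemma~\ref{lem7} to rule out the surviving local pictures. Those configurations, each certified by an explicit internal matching together with an acyclic orientation of out-degree at most $2$, are the paper's main new content, and your proposal neither identifies them nor offers any mechanism that could substitute for them. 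So the frame is right, but the proof as proposed has genuine gaps at both the reducibility step and the case-(2) extremal analysis; cases (1) and (3), by contrast, do close as easily as you suggest.
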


Similarly, we define a family $\cal H$ of NC-graphs  such that $G\in {\cal H}$ if and only if $G$ has no $i$-cycles nor $j$-cycles for any  $\{i,j\}\subseteq \{3,4,6\}$.  We obtain the following corollary  from Theorem \ref{th1}.
\begin{cor}
Every graph  $G \in {\cal H}$ has a matching $M$ such that each of choice number, paint number, DP-number and Alon-Tarsi number of $G-M$ is at most 3.
\end{cor}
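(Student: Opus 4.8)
The plan is to argue by contradiction through a minimal counterexample together with a discharging argument driven by Euler's formula on surfaces of nonnegative characteristic. Let $G$ be a counterexample minimizing $|V(G)|+|E(G)|$; I may assume $G$ is connected and $2$-cell embedded in a surface $\Sigma$ with $\chi(\Sigma)=|V(G)|-|E(G)|+|F(G)|\ge 0$. As noted in the introduction, exhibiting a $(2,1)$-decomposition is the same as finding a matching $M$ such that $G-M$ is $2$-degenerate, equivalently admits an acyclic orientation of maximum out-degree at most $2$; so the task reduces to producing such a matching under each of the three cycle hypotheses.

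First I would build a list of reducible configurations. The base case is $\delta(G)\ge 3$: any vertex $v$ with $d(v)\le 2$ may be deleted, decomposed by minimality, and reinserted at the end of the degeneracy order while left unmatched, since its out-degree is then at most $2$. The substantive reductions concern $3$-vertices and their interaction with $M$: to place a $3$-vertex $v$ last I want to match one incident edge and orient the remaining two outward, which forces the matched neighbor to be $M$-free. I therefore expect to show that configurations in which $3$-vertices cluster (two adjacent $3$-vertices, or a $3$-vertex incident to prescribed short faces) are reducible, by choosing the matching on the deleted part so that a free neighbor is always available. Because the admissible short faces differ across the three hypotheses, I would prove a separate but parallel reducibility lemma in each case.

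Next I would set up discharging with initial charge $\mu(x)=d(x)-4$ on every vertex and face, so that $\sum_x\mu(x)=-4\chi(\Sigma)\le 0$. After reducibility the only negative charges are the $-1$ on $3$-vertices and, in case (3), the $-1$ on $3$-faces (in the triangle-free cases (1) and (2) every face has length at least $4$, so no face is negative). I would then design rules sending charge from high-degree vertices and long faces to the deficient $3$-vertices and $3$-faces, invoking the forbidden-cycle hypotheses to guarantee each deficient object has enough rich neighbors to draw from. Proving $\mu^*(x)\ge 0$ for every $x$ yields $0\le\sum_x\mu^*(x)=\sum_x\mu(x)=-4\chi(\Sigma)\le 0$. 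For the sphere and projective plane ($\chi>0$) this is already a contradiction; the tight case is the torus and Klein bottle ($\chi=0$), where the sum is exactly $0$, so I must additionally exhibit an element of strictly positive final charge (or rule out the resulting all-zero configuration directly via reducibility).

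The main obstacle is twofold. On the reducibility side the defect bound $h=1$ is global: when several low-degree vertices are deleted simultaneously, the matching on the smaller graph must be chosen so that \emph{every} reinserted $3$-vertex retains an unmatched neighbor, which calls for a Hall-type or careful greedy argument rather than a purely local one. On the discharging side, the delicate part is calibrating one coherent rule-set per hypothesis so that no $3$-vertex or short face is left negative; case (3) is the most intricate, since triangles are allowed and the $3$-faces competing for charge coexist with $5$-faces, $7$-faces and larger faces that discharge at different rates.
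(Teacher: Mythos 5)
Your outline reproduces, at the level of strategy, exactly what the paper does: in the paper this corollary is a one-line consequence of Theorem~\ref{th1} (a $(2,1)$-decomposition consists of a subgraph $H$ with $\Delta(H)\le 1$, i.e.\ a matching $M=E(H)$, together with an acyclic orientation of $G-M$ of maximum out-degree at most $2$, and $2$-degeneracy bounds each of $ch$, $\chi_P$, $\chi_{DP}$, $AT$ by $3$), and Theorem~\ref{th1} itself is proved by precisely your scheme --- minimal counterexample, charge $\mu(x)=d(x)-4$ with total $-4\chi(\Sigma)\le 0$, reducibility of $2^-$-vertices and adjacent $3$-vertices (Lemma~\ref{lem5}), per-hypothesis discharging rules, and, because $\chi(\Sigma)$ may be $0$, the extra requirement (the paper's Property~(II)) of exhibiting an element of strictly positive final charge. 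The problem is that everything load-bearing in your proposal is an ``I would'': no reducible configuration beyond the two trivial ones is identified, no matching/orientation construction is written down, and no discharging rule is stated, so as it stands nothing is proved.

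Two concrete places where the deferral hides the actual difficulty. First, your ``Hall-type'' worry about simultaneously reinserted $3$-vertices does not arise in the paper's method and is not the obstacle: each reducible configuration $X$ is deleted whole, a matching $M'$ and an acyclic orientation $D'$ are specified edge-by-edge inside $X$, and all edges between $X$ and $V\setminus X$ are oriented out of $X$ (Lemmas~\ref{lem5}, \ref{lem7}, \ref{lem6}); what is needed is the right \emph{list} of configurations, which you do not supply. Second, and decisively, you mislocate the hard case. You predict case (3) (no $4$- or $6$-cycles, triangles present) is the intricate one; in fact the paper's proof of that case is short, because a $3$-face simply collects $\frac13$ from each of its three adjacent $5^+$-faces. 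The genuinely hard case is (2) (no $3$- or $6$-cycles): there the tight situation $\chi(\Sigma)=0$ cannot be dismissed by ``calibrating rules,'' since after discharging all charges vanish exactly when every vertex is a $3$- or $4$-vertex and every face is a $4$-face or a $5$-face incident with two bad $3$-vertices, and excluding this limit structure costs the paper a long structural analysis (Claims 1--3 and the two-case chase of possible identifications among $v_8,\dots,v_{11}$) resting on seven additional reducible configurations on roughly eleven vertices each, with bespoke matchings and acyclic orientations (Lemma~\ref{lem7}, Fig.~1). Your proposal gives no mechanism for any of this, so it has a genuine gap: it is the paper's strategy in outline, minus the content that makes the strategy succeed. (If your aim were only the corollary as stated, the economical proof is to cite Theorem~\ref{th1} together with the degeneracy-to-coloring facts from the introduction, rather than re-deriving the theorem.)
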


\begin{cor}\label{cor1}
(Dong and Xu \cite{Xu})  Every  toroidal graph $G$ which contains neither $i$-cycles nor $j$-cycles for any subset $\{i,j\}\subseteq \{3,4,6\}$ is 1-defective 3-colorable.
\end{cor}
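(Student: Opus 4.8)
The plan is to derive this directly from Theorem \ref{th1}, since the torus is a surface of characteristic $0$ and hence every toroidal graph is a NC-graph. First I would observe that the three two-element subsets $\{i,j\}\subseteq\{3,4,6\}$ are exactly $\{3,4\}$, $\{3,6\}$ and $\{4,6\}$, which correspond precisely to conditions (1), (2) and (3) of Theorem \ref{th1}. Thus any toroidal graph $G$ satisfying the hypothesis falls under one of those three cases, and Theorem \ref{th1} guarantees that $G$ is $(2,1)$-decomposable.

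Next I would unwind what $(2,1)$-decomposability gives us. By definition there is a pair $(D,H)$ in which $H$ is a subgraph of $G$ with $\Delta_H\le 1$ and $D$ is an acyclic orientation of $G-E(H)$ with $\Delta_D^+\le 2$. Since an acyclic orientation of maximum out-degree at most $2$ exists if and only if the underlying graph is $2$-degenerate, the graph $G-E(H)$ is $2$-degenerate, and therefore $3$-choosable, in particular properly $3$-colorable. Set $M=E(H)$; because $\Delta_H\le 1$, the edge set $M$ is a matching of $G$, so each vertex of $G$ is incident to at most one edge of $M$.

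Finally I would lift a proper $3$-coloring of $G-M$ to a defective coloring of $G$. Let $\varphi$ be a proper $3$-coloring of $G-M$ and regard it as a $3$-coloring of $G$. For any vertex $v$, every neighbor of $v$ joined to it by an edge of $G-M$ receives a color different from $\varphi(v)$; the only possible same-colored neighbor of $v$ is the other endpoint of the (at most one) matching edge of $M$ incident to $v$. Hence $v$ has at most one neighbor of its own color, so $\varphi$ is a $1$-defective $3$-coloring of $G$, as required.

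The work here is entirely bookkeeping once Theorem \ref{th1} is in hand; the only point that needs care is the passage between the two equivalent formulations of a $(d,h)$-decomposition (acyclic orientation of bounded out-degree versus $d$-degeneracy) and the observation that the single matching edge at each vertex is exactly what accounts for the permitted defect of $1$. Indeed, one could equally well invoke the preceding corollary, which already supplies a matching $M$ with $ch(G-M)\le 3$ for every $G\in\mathcal H$; proper $3$-colorability then follows at once. The genuine mathematical content sits in the proof of Theorem \ref{th1}, not in this reduction.
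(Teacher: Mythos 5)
Your proposal is correct and follows exactly the route the paper intends: the paper states this corollary as an immediate consequence of Theorem~\ref{th1}, using the same chain of observations you give (the torus has characteristic $0$ so toroidal graphs are NC-graphs, the pairs $\{3,4\}$, $\{3,6\}$, $\{4,6\}$ are precisely the three cases of the theorem, and a $(2,1)$-decomposition yields a matching $M$ with $G-M$ $2$-degenerate, hence properly $3$-colorable, the single matching edge at each vertex supplying the permitted defect of $1$). Nothing is missing; your unwinding of the degeneracy/acyclic-orientation equivalence is exactly the bookkeeping the paper leaves implicit.
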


In the end of this section, we introduce some terminology and notation. Let $G$  be a  graph and denote by $V(G), E(G), F(G)$ (or $V, E, F$ for short) the sets of vertices, edges and faces of $G$, respectively.  Let $G$ be a  (di)graph.  For a vertex $v$,  denote by $d(v)$ ($d^+(v)$ or $d^-(v)$ in digraph)  the {\em degree} ({\em out-degree} or {\em in-degree} in digraph) of $v$. Denote by  $N_G(v)$ (or $N(v)$ for short) the set of {\em neighbors of a vertex} $v$ in $G$.
 A {\em  $k$-vertex} ({\em  $k^{+}$-vertex} or {\em  $k^{-}$-vertex}) is a vertex of degree $k$ ( at least $k$ or at most $k$). Similarly, a {\em  $k$-face} ({\em  $k^{+}$-face} or {\em  $k^{-}$-face}) is a face of degree $k$ (at least $k$ or at most $k$). For $f\in F(G)$, denote by $d(f)$  the {\em degree} of face $f$ in $G$ which is the number of edges incident with  $f$
   and $b(f)$  the {\em boundary walk of f} and write $f=[u_1u_2\ldots u_l]$ when $u_1, u_2, \ldots, u_l$ are the boundary vertices of $f$ in clockwise order.  A $l$-face $[u_1u_2\ldots u_l]$ is called an $(a_1, a_2,\ldots, a_l)$-face if $d(u_i) = a_i$ for $i =1, 2,\ldots, l$.
   Two faces are {\em  adjacent} if they share at least one common edge.
For $v\in V(G)$ and $i\ge3$, denote by $n_i(v)$ ($n_{i^+}(v)$ or ($n_{i^-}(v)$) the number of all $i$- ($i^+$- or ($i^-$-) faces incident to $v$.
   A cycle  is a $k$-cycle if it contains $k$ vertices. For a cycle $C$,  an edge $xy\in E(G)\setminus E(C)$ is called a {\em chord} of $C$ if $ x, y \in V(C)$. Let $C$ be a $k$-cycle. Then $C$ is called {\em chord $k$-cycle}.

\section{Reducible configurations}

Suppose otherwise that Theorems~\ref{th0} and \ref{th1}  are both false. Assume that
\begin{equation}\label{e1}
\mbox{ $G\in {\cal G}$ is a  counterexample to Theorem \ref{th0}   with  $|V(G)|$ minimized.}
\end{equation}
In this case, $G$  has no chord 5-cycles nor chord 6-cycles nor chord 7-cycles and adjacent 4-cycles.  Clearly, $G$ has no $(3,1)$-decomposition but any subgraph of $G$ does. Similarly,  assume that
 \begin{equation} \label{e2}
 \mbox{ $H\in {\cal H}$ is a  counterexample to Theorem \ref{th1}  with  $|V(H)|$ minimized. }
\end{equation}
In this case, $H$ has neither $i$-cycle nor $j$-cycle, where $\{i, j\}\subset \{3,4, 6\}$.  Clearly, $H$ has no $(2,1)$-decomposition but any subgraph of $H$ does. In this section,  we establish several lemmas. The following lemma is straightforward.

  \begin{lemma}\label{lem0} Assume that $G$ is a NC-graph and
   $d(v)\geq3$ for all $v\in V(G)$. If $G$ has no $6$-cycles, then two $4$-faces are not adjacent.
  \end{lemma}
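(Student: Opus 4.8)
The plan is to argue by contradiction. Suppose $G$ contains two adjacent $4$-faces $f_1$ and $f_2$, so they share a common edge $e=x_1x_2$. First I would observe that, since $G$ is simple and $d(v)\ge 3$ for every $v$, the boundary of each $4$-face is a genuine $4$-cycle rather than a degenerate closed walk of length $4$: any repeated vertex on a closed walk of length four forces an edge traversed twice on the same face, hence a pendant (degree-$1$) vertex, which is excluded by $d(v)\ge 3$. Thus I may write $f_1=[x_1x_2x_3x_4]$ and $f_2=[x_2x_1x_5x_6]$ as $4$-cycles meeting along $e$.

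Next I would delete the shared edge $x_1x_2$ from each bounding cycle and concatenate the two surviving paths $x_2x_3x_4x_1$ and $x_1x_5x_6x_2$. This produces a closed walk $W=x_2x_3x_4x_1x_5x_6$ whose length is exactly $4+4-2=6$; equivalently, $W$ is carried by the symmetric difference of the two edge sets, an even subgraph with six edges. If the six vertices $x_1,\dots ,x_6$ are pairwise distinct, then $W$ is a $6$-cycle of $G$, contradicting the hypothesis that $G$ has no $6$-cycles. This settles the generic situation, and it is the part I would lead with.

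The step I expect to be the main obstacle is ruling out the degeneracies in which $W$ fails to be a simple cycle, i.e.\ when the two bounding $4$-cycles meet in more than the single edge $e$. I would dispose of these by a rotation (face-corner) analysis at $x_1$ and $x_2$. If $f_1$ and $f_2$ shared a second edge at $x_1$, or at $x_2$, the two faces would occupy the \emph{same} angular wedge there, which is impossible for distinct faces; this rules out the coincidences $x_3=x_6$ and $x_4=x_5$. The only remaining possibility is $x_3=x_5$ (and symmetrically $x_4=x_6$), which forces the edges $x_1x_3$ and $x_2x_3$ to be present, creating a triangle $x_1x_2x_3$ both of whose $x_1x_2$-incident faces pass through $x_3$. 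Here I would track the face lying on a fixed side of the edge $x_1x_2$: the wedge of $f_2$ at $x_1$ sits between $x_1x_2$ and $x_1x_3$, while the wedge of $f_1$ at $x_2$ sits between $x_2x_1$ and $x_2x_3$, so the same side of $x_1x_2$ would be bordered by $f_2$ near $x_1$ and by $f_1$ near $x_2$ — impossible, since a single face borders each side of an edge along its whole length. Once every such degeneracy is excluded, $W$ is a $6$-cycle and the contradiction is complete; the delicate point throughout is that this last argument must be phrased locally at the edge $e$ so that it remains valid on all four surfaces of nonnegative characteristic, not merely in the plane.
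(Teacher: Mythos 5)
Your generic case (gluing the two boundary paths into a $6$-cycle) is exactly the paper's main step, and your treatment of the coincidences $x_3=x_6$ and $x_4=x_5$ is essentially sound: there the faces share a second edge consecutive with $x_1x_2$, and with $d\geq 3$ the two faces would have to claim the same corner at $x_2$ (resp.\ $x_1$), which is the paper's ``two common edges force a $2$-vertex'' case. The genuine gap is your disposal of $x_3=x_5$ (write $y=x_3=x_5$, so $x_1x_2y$ is a triangle). The claim that ``the same side of $x_1x_2$ would be bordered by $f_2$ near $x_1$ and by $f_1$ near $x_2$'' does not follow: ``toward $y$'' is not a side of the edge $x_1x_2$, because the edges $x_1y$ and $x_2y$ may leave $x_1$ and $x_2$ on \emph{opposite} local sides of $x_1x_2$. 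Indeed the corners are perfectly consistent: at $x_1$ the two sides of $x_1x_2$ carry $f_1$ (between $x_1x_4$ and $x_1x_2$) and $f_2$ (between $x_1x_2$ and $x_1y$), and at $x_2$ they carry $f_1$ (between $x_2x_1$ and $x_2y$) and $f_2$ (between $x_2x_6$ and $x_2x_1$); each side of the edge is bordered by one face along its whole length, with no contradiction.

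Worse, this configuration is actually realizable under the lemma's stated hypotheses, so no local argument can exclude it. Take $V=\{x_1,x_2,y,x_4,x_6\}$ with the eight edges $x_1x_2$, $x_1y$, $x_2y$, $x_1x_4$, $yx_4$, $x_2x_6$, $yx_6$, $x_4x_6$, and the rotation system $\pi_{x_1}=(x_1x_4,\,x_1x_2,\,x_1y)$, $\pi_{x_2}=(x_2x_6,\,x_2x_1,\,x_2y)$, $\pi_{y}=(yx_2,\,yx_4,\,yx_1,\,yx_6)$, $\pi_{x_4}=(x_4y,\,x_4x_1,\,x_4x_6)$, $\pi_{x_6}=(x_6y,\,x_6x_2,\,x_6x_4)$; tracing faces (next edge after arriving along $uv$ is the successor of $vu$ in $\pi_v$) yields exactly the two $4$-faces $[x_1x_2yx_4]$ and $[x_2x_1yx_6]$ sharing the edge $x_1x_2$, plus one $8$-face, so $|V|-|E|+|F|=5-8+3=0$ and this is a $2$-cell embedding in the torus. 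It has minimum degree $3$ and, having only five vertices, no $6$-cycle; hence two adjacent $4$-faces can occur, and Lemma~\ref{lem0} as literally stated fails on the torus. The case $x_3=x_5$ (and symmetrically $x_4=x_6$) can only be killed by an extra hypothesis: it forces the triangle $x_1x_2x_3$ (resp.\ $x_1x_2x_4$), and the lemma is invoked only in the proof of Theorem~\ref{th1}(2), where $G$ has no $3$-cycles, so the coincidence is excluded outright there. (For what it is worth, the paper's own suppressed proof has the same blind spot: its ``one common edge'' case tacitly assumes the six boundary vertices are distinct.) So the repair is not a sharper rotation argument but adding triangle-freeness to the hypotheses and using it to rule out $x_3=x_5$ and $x_4=x_6$.
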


Recall that a graph $H$ is $d$-degenerate if and only if $H$  has an acyclic orientation $D$ with $\Delta_D^+\le d$. Thus, to prove that a graph $G$ is $(d, h)$-decomposable, it is sufficient to show that $G$ can be decomposed into $H_1$ and $H_2$ such that $H_1$ has an acyclic orientation with $\Delta_D^+\leq d$ and $H_2$ has the maximum degree at most $k$. From Lemma~\ref{lem1} to \ref{lem4}, we assume that $G$ satisfies Assumption (\ref{e1}).

\begin{lemma}\label{lem1}
(1) $d(v)\geq4$ for all $v\in V(G)$;

(2) $G$ does not contain two adjacent $4$-vertices.
\end{lemma}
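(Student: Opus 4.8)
The plan is to exploit the minimality in Assumption~(\ref{e1}): every proper subgraph of $G$ again lies in $\mathcal{G}$ (deleting vertices or edges can create neither a new chord nor a new cycle, so no forbidden configuration can appear) and has strictly fewer vertices, hence admits a $(3,1)$-decomposition. I will show that either forbidden configuration would let me extend such a decomposition to all of $G$, contradicting that $G$ has none. Throughout I use the reformulation recalled in the excerpt: a $(3,1)$-decomposition consists of a matching $H_2$ (maximum degree at most $1$) together with an edge set $H_1=G-E(H_2)$ carrying an acyclic orientation $D$ with $\Delta_D^+\le 3$. The single most useful observation is that if a reinstated vertex $x$ has all of its $H_1$-edges oriented away from it, then $x$ is a source of $D$, so adjoining $x$ to an acyclic orientation cannot close a directed cycle; thus when extending I only need to monitor the out-degree bound at the reinstated vertices and the matching condition on $H_2$.

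For part~(1), suppose some $v$ has $d(v)\le 3$. First I delete $v$ and take a $(3,1)$-decomposition $(D,H_2)$ of $G-v$ granted by minimality. Then I reinstate $v$, place all of its at most three incident edges into $H_1$, and orient each of them out of $v$. This makes $v$ a source, so $D$ stays acyclic; its out-degree is at most $3$; the neighbours receive only in-edges, so their out-degrees are unchanged; and $H_2$ is untouched, hence still a matching. This is a $(3,1)$-decomposition of $G$, the desired contradiction.

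For part~(2), suppose $u$ and $v$ are adjacent $4$-vertices; write $w_1,w_2,w_3$ for the other neighbours of $u$ and $x_1,x_2,x_3$ for the other neighbours of $v$, all lying in $G'':=G-\{u,v\}$. I delete both $u$ and $v$, take a $(3,1)$-decomposition $(D,H_2)$ of $G''$, and reinstate $u,v$ as follows: put the six edges $uw_1,uw_2,uw_3,vx_1,vx_2,vx_3$ into $H_1$, orienting them out of $u$ and out of $v$ respectively, and put the single remaining edge $uv$ into $H_2$. Both $u$ and $v$ become sources, so acyclicity is preserved; each has out-degree exactly $3$; the other vertices gain only in-edges. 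Since $u,v\notin V(G'')$, the edge $uv$ is disjoint from every edge of the matching $H_2$, so $H_2\cup\{uv\}$ is again a matching. Hence $G$ is $(3,1)$-decomposable, a contradiction.

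The delicate point—and the reason the reduction in part~(2) removes both vertices rather than one—is the interaction between the out-degree cap and the matching. If I deleted only $u$, then after reinstating it I would have to dispose of one of its four edges either as an in-edge (risking a directed cycle routed through $v$ and $G''$) or into $H_2$ (which fails whenever the chosen neighbour is already matched), and neither is guaranteed. Deleting both $u$ and $v$ sidesteps this: the edge $uv$ becomes a ``free'' matching edge incident to two brand-new vertices, simultaneously discharging the one surplus edge at each of $u$ and $v$ and allowing both to be realised as sources. I therefore expect the only real care to be in checking that making the new vertices sources keeps $D$ acyclic and that no neighbour's out-degree is inadvertently raised; notably, the cycle-structure hypotheses defining $\mathcal{G}$ are not used in this lemma and will enter only the later, harder reducibility arguments.
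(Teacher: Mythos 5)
Your proposal is correct and matches the paper's own proof essentially verbatim: part (1) deletes $v$ and orients its at most three edges out of $v$, and part (2) deletes both $u$ and $v$, places $uv$ in the matching, and orients the remaining six edges outward, exactly as in Lemma~\ref{lem1}. Your added remarks (making reinstated vertices sources to preserve acyclicity, and covering degrees $1$ and $2$ rather than only $3$-vertices) merely spell out details the paper leaves implicit.
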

\begin{proof}
(1) Suppose otherwise that $v$ is a $3$-vertex and $N(v)=\{v_1,v_2,v_3\}$. By the minimality of $G$, there is a $(3,1)$-decomposition $(D^*, M^*)$ of $G-\{v\}$.  Let $M=M^*$ and $D=D^*\cup \{\overrightarrow{vv_1},\overrightarrow{vv_2}, \overrightarrow{vv_3}\}$. Then $(D,M)$ is a $(3,1)$-decomposition of $G$, a contradiction.

(2) Suppose otherwise that $u$ is a 4-vertex adjacent to a $4$-vertex $v$. Let $N(u)=\{u_1,u_2,u_3,v\}$ and $N(v)=\{v_1, v_2, v_3,u\}$. By the minimality  of $G$, there is a $(3,1)$-decomposition $(D^*, M^*)$ of $G-\{u,v\}$. Let $M=M^*\cup \{uv\}$ and $D=D^*\cup\{\overrightarrow{vv_1},\overrightarrow{vv_2}, \overrightarrow{vv_3}, \overrightarrow{uu_1},\overrightarrow{uu_2}, \overrightarrow{uu_3}\}$. Then $(D,M)$ is a $(3,1)$-decomposition of $G$, a contradiction.
\end{proof}

\begin{lemma}\label{lem2}
(1) A $5$-vertex $v$ is incident with at most one $(4,5,5)$-face.

(2) A $5$-vertex $v$ is  not incident with three consecutively adjacent 3-faces, one of which is $(4,5,5)$-face and other two of which are $(4,5,6)$-faces.
\end{lemma}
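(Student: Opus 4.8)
The plan is to argue by contradiction from the minimal counterexample $G$ fixed in~(\ref{e1}), using Lemma~\ref{lem1} to control degrees and the fact that every proper subgraph of $G$ already has a $(3,1)$-decomposition. Throughout I would lean on the following extension principle, which is the reattachment move used implicitly in Lemma~\ref{lem1}: if $u$ is a $4$-vertex and $(D^*,M^*)$ is a $(3,1)$-decomposition of $G-u$ in which $u$ has an $M^*$-unsaturated neighbour $z$, then setting $M=M^*\cup\{uz\}$ and orienting the other three edges of $u$ out of $u$ gives a $(3,1)$-decomposition of $G$; indeed $u$ has out-degree $3$, no other out-degree changes, $M$ remains a matching, and since $u$ has in-degree $0$ in the new orientation it lies on no directed cycle, so acyclicity is preserved. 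Hence each $4$-vertex I delete can be put back the moment it retains an $M^*$-unsaturated neighbour, and the entire difficulty is to guarantee such neighbours.

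For part~(1), suppose a $5$-vertex $v$ is incident with two $(4,5,5)$-faces $f_1$ and $f_2$. By Lemma~\ref{lem1}(1) all their vertices have degree at least $4$, and by Lemma~\ref{lem1}(2) each triangle carries exactly one $4$-vertex; call them $u_1\in f_1$ and $u_2\in f_2$, each adjacent to $v$ and to the second $5$-vertex $w_i$ of its triangle. First I would delete $u_1$ and $u_2$, apply minimality to get a $(3,1)$-decomposition $(D^*,M^*)$ of $G-\{u_1,u_2\}$, and then reinstate the two $4$-vertices one at a time via the extension principle. The shared vertex $v$ is the key resource: when $v$ is $M^*$-unsaturated I can match it to one $u_i$ and dispose of the other through its private neighbours $w_i$ and the two off-triangle neighbours. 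The only residual cases are those in which all of the intended matching partners are already $M^*$-saturated.

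The hard part will be exactly this fully saturated case, where no edge at some $u_i$ may be thrown into $M$ directly. Here I would perform a local exchange along the triangle: since the two non-$4$-vertices $v$ and $w_i$ of $f_i$ are themselves adjacent, I can remove an existing matching edge incident to one of them, match $u_i$ into the vertex so vacated, and reorient the freed edge, choosing its direction and the topological placement of $u_i$ in $D^*$ so that neither the out-degree bound $3$ nor acyclicity is violated (its single in-edge must close no directed path). Showing that this exchange can always be completed is the delicate point, and it is precisely having two deletable $4$-vertices sharing the neighbour $v$ that supplies the room to do so; a single $(4,5,5)$-face offers no such slack, which is why the lemma forbids two rather than one.

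For part~(2) I would proceed in the same spirit, starting from the forbidden fan of three consecutively adjacent $3$-faces $f_1=[vab]$, $f_2=[vbc]$, $f_3=[vcd]$ at the $5$-vertex $v$, one a $(4,5,5)$-face and the other two $(4,5,6)$-faces. Lemma~\ref{lem1}(2) again forces a unique $4$-vertex in each triangle and, since consecutive shared vertices such as $b$ and $c$ are adjacent, forbids them from both being $4$-vertices; this pins down where the $4$-vertices may sit among $a,b,c,d$. After fixing these positions I would delete the $4$-vertices, invoke minimality, and reattach them by the extension principle and the same triangle-exchange, with the three consecutive faces now playing the role the two faces played above. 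The expected bottleneck is once more the fully $M^*$-saturated configuration, resolved by a matching/orientation exchange whose feasibility rests on the adjacencies forced by the three shared triangles.
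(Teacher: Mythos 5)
Your extension principle for reattaching a single $4$-vertex is sound as far as it goes, but the proof has a genuine gap at exactly the point you flag as ``delicate'': the fully $M^*$-saturated case. Your proposed triangle-exchange --- delete a matching edge $vx$ of $M^*$, match $u_i$ to the vacated vertex $v$, and reinsert $vx$ into the orientation --- is asserted rather than proven, and it does not always go through. The freed edge $vx$ must be oriented one way or the other: orienting it as $\overrightarrow{xv}$ may push the out-degree of $x$ above $3$ (you have no control over $x$, which is an arbitrary vertex of $G$ outside the configuration), while orienting it as $\overrightarrow{vx}$ may close a directed cycle through a pre-existing directed path from $x$ to $v$ in $D^*$. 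One of the two options can be blocked by the degree bound and the other by acyclicity simultaneously, so there is no guarantee the exchange can be completed, and nothing in the local structure of two $(4,5,5)$-faces supplies the ``slack'' you invoke. A secondary problem in part (1): when the two $(4,5,5)$-faces share an edge through $v$, the unique $4$-vertex of each triangle may be the \emph{same} vertex (the common neighbour $v_2$ with $d(v_1)=d(v_3)=5$), so your setup with two distinct vertices $u_1,u_2$ each having a ``private'' $5$-neighbour $w_i$ does not cover all configurations; the paper's Case~1 splits precisely along this distinction.

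The paper avoids your bottleneck entirely by deleting the \emph{whole} configuration rather than only the $4$-vertices: for part (1) it removes $\{v,v_1,v_2,v_3\}$ (or $\{v,v_1,v_2,v_3,v_4\}$ in Case~2), and for part (2) it removes $\{v,v_1,v_2,v_3,v_4\}$. Minimality under Assumption~(\ref{e1}) gives $(D^*,M^*)$ on the remainder, and the new matching edges (e.g.\ $M=M^*\cup\{vv_1,v_2v_3\}$) join only deleted vertices to deleted vertices, so no saturation conflict with $M^*$ can ever arise --- the entire ``saturated case'' disappears by construction. Acyclicity is then immediate: every new arc either goes from the deleted cluster to the rest of $G$ (and no arc returns), or lies inside the cluster following an explicitly acyclic pattern, and the out-degree bound $3$ is checked vertex by vertex using the degree information from Lemma~\ref{lem1}. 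If you want to salvage your approach, the fix is the same move: enlarge the deleted set until all matching edges you need are internal to it, rather than trying to steal matching edges from $M^*$.
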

\begin{proof}
 Let $v_1, v_2, \ldots, v_5$ be the neighbors of $v$ in clockwise order, and $f_1,  f_2, \ldots , f_5$ be the incident faces of $v$ with $vv_i, vv_{i+1}\in b(f_i)$ for $i=1, 2, \ldots, 5$ where indices are taken modulo 5.

 (1) Suppose otherwise that $v$ is incident with two $(4,5,5)$-faces.
 There are two cases.

\n{\bf Case 1.} $f_1$ and $f_2$ are $(4,5,5)$-faces.

 We first assume that $d(v_1)=d(v_3)=5$ and $d(v_2)=4$. Let $N(v_1)=\{v_{11}, v_{12}, v_{13}, v, v_2\}, N(v_2)=\{v_{21},v,v_1,v_3\}$ and $N(v_3)=\{v_{31},v_{32},v_{33},v, v_2\}$.

By the minimally of $G$, there is a $(3,1)$-decomposition $(D^*, M^*)$ of $G-\{v,v_1, v_2,v_3\}$.
Let
$M=M^*\cup \{vv_1, v_2v_3\}$ and $D=D^*\cup
\{\overrightarrow{v_1v_{11}},\overrightarrow{v_1v_{12}}, \overrightarrow{v_1v_{13}}, \overrightarrow{v_2v_{21}},\overrightarrow{v_3v_{31}}, \overrightarrow{v_3v_{32}}, \overrightarrow{v_3v_{33}}, \overrightarrow{vv_3}, \overrightarrow{vv_4}, \overrightarrow{vv_5}, \overrightarrow{v_2v},\overrightarrow{v_2v_1}\}$.
 Then $(D,M)$ is a $(3,1)$-decomposition of $G$, a contradiction.

  We further assume that $d(v_1)=d(v_3)=4$ and $d(v_2)=5$. Let $N(v_1)=\{v_{11}, v_{12}, v, v_2\}, N(v_2)=\{v_{21}, v_{22},v,v_1,v_3\}$ and $N(v_3)=\{v_{31},v_{32},v, v_2\}$.

By the minimality of $G$, there is a $(3,1)$-decomposition $(D^*, M^*)$ of $G-\{v,v_1, v_2,v_3\}$.
Let $M=M^*\cup \{v_1v_2, vv_3\}$ and $D=D^*\cup
\{\overrightarrow{v_1v_{11}},\overrightarrow{v_1v_{12}},  \overrightarrow{v_2v_{21}}, \overrightarrow{v_2v_{22}},\overrightarrow{v_3v_{31}}, \overrightarrow{v_3v_{32}},  \overrightarrow{vv_2}, \overrightarrow{vv_4}, \overrightarrow{vv_5},  \overrightarrow{v_1v},\overrightarrow{v_3v_2}\}$.
 Then $(D,M)$ is a $(3,1)$-decomposition of $G$, a contradiction.

\n{\bf Case 2.} $f_1$ and $f_3$ are $(4,5,5)$-faces.

We assume, without loss of generality, that $d(v_1)=d(v_3)=4, d(v_2)=d(v_4)=5$ and $N(v_1)=\{v_{11},v_{12},v,v_2\}, N(v_2)=\{v_{21}, v_{22}, v_{23}, v, v_1\}, N(v_3)=\{v_{31},v_{32},v,v_4\}, N(v_4)=\{v_{41},v_{42},v_{43},v,v_3\}$.

By the minimality of $G$,  there is a $(3,1)$-decomposition $(D^*, M^*)$ of $G-\{v,v_1, v_2,v_3, v_4\}$.
Let $M=M^*\cup \{v_1v_2, v_3v_4\}$ and $D=D^*\cup
\{\overrightarrow{v_1v_{11}},\overrightarrow{v_1v_{12}}, \overrightarrow{v_2v_{21}}, \overrightarrow{v_2v_{22}}, \overrightarrow{v_2v_{23}},\overrightarrow{v_3v_{31}}, \overrightarrow{v_3v_{32}},  \overrightarrow{v_4v_{41}}, \overrightarrow{v_4v_{42}}, \\ \overrightarrow{v_4v_{43}},
\overrightarrow{vv_2}, \overrightarrow{vv_4}, \overrightarrow{vv_5}, \overrightarrow{v_1v},  \overrightarrow{v_3v}\}$.
 Then $(D,M)$ is a $(3,1)$-decomposition of $G$, a contradiction.

 (2) By (1) and by symmetry, suppose otherwise that $f_1$ is a $(4,5,5)$-face and $f_2, f_3$ are two $(4,5,6)$-faces. In this case, $d(v_1)=5, d(v_2)=d(v_4)=4$ and $d(v_3)=6$. Let $N(v_1)=\{v_{11}, v_{12}, v_{13}, v, v_2\}, N(v_2)=\{v_{21}, v, v_1, v_3\}, N(v_3)=\{v_{31}, v_{32}, v_{33}, v, v_2, v_4\}$ and $N(v_4)=\{v_{41}, v_{42}, \\v, v_3\}$.
By the minimality of $G$,  there is a $(3,1)$-decomposition $(D^*, M^*)$ of $G-\{v,v_1, v_2,v_3, v_4\}$.
Let $M=M^*\cup \{v_1v_2, v_3v_4\}$ and $D=D^*\cup
\{\overrightarrow{v_1v_{11}},\overrightarrow{v_1v_{12}}, \overrightarrow{v_1v_{13}}, \overrightarrow{v_2v_{21}}, \overrightarrow{v_3v_{31}}, \overrightarrow{v_3v_{32}}, \overrightarrow{v_3v_{33}}, \overrightarrow{v_4v_{41}}, \overrightarrow{v_4v_{42}}, \\ \overrightarrow{vv_1}, \overrightarrow{vv_3},  \overrightarrow{vv_5}, \overrightarrow{v_2v}, \overrightarrow{v_2v_3}, \overrightarrow{v_4v}\}$.
 Then $(D,M)$ is a $(3,1)$-decomposition of $G$, a contradiction.
\end{proof}

\begin{lemma}\label{lem3}
  If $G$ is a NC-graph without either chord $5$-cycles or chord $7$- and adjacent $4$-cycles, then every $4^+$-vertex $v$ is incident with at most two consecutively adjacent 3-faces.  Moreover, $v$ is incident with at most $\lfloor\frac{2d(v)}{3}\rfloor$ 3-faces.
 \end{lemma}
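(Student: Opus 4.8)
The plan is to prove the two assertions in sequence, deriving the ``moreover'' count from the first one (no three consecutively adjacent $3$-faces) by a purely combinatorial argument. The heart of the matter is the first assertion, and the real work is to exhibit a globally forbidden subgraph once three consecutive $3$-faces are assumed.

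First I would argue the first assertion by contradiction. Suppose a $4^+$-vertex $v$ is incident with three consecutively adjacent $3$-faces $f_1=[vv_1v_2]$, $f_2=[vv_2v_3]$ and $f_3=[vv_3v_4]$, with $v_1,v_2,v_3,v_4$ in this cyclic order around $v$. Since $d(v)\ge 4$, the four neighbors $v_1,v_2,v_3,v_4$ are distinct, and the three triangles supply the edges $v_1v_2$, $v_2v_3$, $v_3v_4$. The key observation is that this configuration forces one of the two globally forbidden patterns, according to which hypothesis $G$ satisfies. If $G$ has no chord $5$-cycle, I would exhibit the $5$-cycle $v_1v_2v_3v_4v$ (on the edges $v_1v_2,v_2v_3,v_3v_4,v_4v,vv_1$), which carries the chord $vv_2$ (indeed also $vv_3$), a contradiction. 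If instead $G$ has no chord $7$-cycles and no adjacent $4$-cycles, I would exhibit the two $4$-cycles $C_1$ on $\{v,v_1,v_2,v_3\}$ and $C_2$ on $\{v,v_2,v_3,v_4\}$; both have length $4$, they are distinct (since $v_1\notin C_2$), and they share the edge $v_2v_3$, so they are adjacent $4$-cycles, again a contradiction. This settles the first assertion under either hypothesis.

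For the count, I would list the $d:=d(v)$ faces incident with $v$ as a cyclic sequence $f_1,\dots,f_d$ and let $t$ be the number of $3$-faces among them; the first assertion says no three cyclically consecutive $f_i$ are all $3$-faces. If $t=0$ the bound is immediate, so assume $t\ge 1$. I would then partition the $3$-faces into maximal runs of consecutive $3$-faces, say $r\ge 1$ runs of lengths $\ell_1,\dots,\ell_r$ with each $\ell_j\le 2$; cyclically, consecutive runs are separated by at least one non-$3$-face, so the number of non-$3$-faces is at least $r$. Hence $t=\sum_j \ell_j\le 2r$, giving $r\ge t/2$, and therefore $d=t+(\text{number of non-}3\text{-faces})\ge t+r\ge \tfrac{3t}{2}$. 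Rearranging yields $t\le \tfrac{2d}{3}$, and since $t$ is an integer, $t\le\lfloor 2d(v)/3\rfloor$, as required.

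I expect the only genuine obstacle to be the correct identification of the forbidden configuration in the two hypotheses together with a non-degeneracy check: one must verify that $v_1,v_2,v_3,v_4$ are genuinely distinct (which is exactly where $d(v)\ge4$ is used) so that the claimed $5$-cycle, respectively the two $4$-cycles, really have the stated lengths and share the stated edge. Once the configuration is pinned down, both the cycle/chord bookkeeping and the subsequent counting are routine.
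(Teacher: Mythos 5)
Your proof is correct and follows essentially the same route as the paper: assuming three consecutively adjacent $3$-faces at $v$, you exhibit the $5$-cycle $v_1v_2v_3v_4v$ with chord $vv_2$ (the paper uses $vv_3$) in the first case, and the two $4$-cycles sharing the edge $v_2v_3$ in the second, exactly as in the paper's proof of Lemma~\ref{lem3}. Your cyclic run-counting argument for the bound $\lfloor 2d(v)/3\rfloor$ is a correct elaboration of a step the paper merely asserts.
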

 \begin{proof}
 Suppose otherwise that  $v$ is a $4^+$-vertex incident with three consecutively adjacent 3-faces $[v_1vv_2], [v_2vv_3]$ and $[v_3vv_4]$.
In this case,  $G$ has a $5$-cycle $[v_1v_2v_3v_4v]$ with a chord $vv_3$, a contradiction.
Observe that two adjacent 4-faces $f_1=[v_1v_2v_3v], f_2=[v_2v_3v_4v]$  have one common edge $v_2v_3$.  Thus, $G$ has adjacent $4$-cycles, a contradiction.
 Therefore,   $v$ is  incident with at most $\lfloor\frac{2d(v)}{3}\rfloor$ 3-faces.
\end{proof}

\begin{lemma}\label{lem4}
Let $G$ be a NC-graph without chord 6-cycles. Then  every $5^+$-vertex $v$ is incident to at most three consecutively adjacent 3-faces.  Thus, $v$ is incident to at most $(d(v)-2)$ 3-faces.
\end{lemma}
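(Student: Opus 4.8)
The plan is to prove the first assertion by contradiction and then deduce the numerical bound from it by a short counting argument. Fix a $5^+$-vertex $v$ and let $v_1, v_2, \ldots, v_{d(v)}$ be its neighbors in clockwise order, with $f_i$ the incident face satisfying $vv_i, vv_{i+1}\in b(f_i)$ (indices modulo $d(v)$), exactly the convention used in the proofs of Lemmas~\ref{lem2} and \ref{lem3}. A \emph{run} of $3$-faces is a block of consecutive incident faces $f_i, f_{i+1}, \ldots$ each of which is a $3$-face; when $f_j$ is a $3$-face it reads $f_j=[vv_jv_{j+1}]$, so the edge $v_jv_{j+1}$ is present in $G$.

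First I would show that $v$ cannot be incident to four consecutively adjacent $3$-faces. Suppose otherwise; after relabeling we may assume $f_1, f_2, f_3, f_4$ are $3$-faces, which forces the edges $v_1v_2, v_2v_3, v_3v_4, v_4v_5$ to lie in $G$. Since $d(v)\geq 5$, the vertices $v, v_1, \ldots, v_5$ are pairwise distinct (simplicity gives distinct neighbors), so together with the spokes $vv_1$ and $vv_5$ these edges form the $6$-cycle $C=[vv_1v_2v_3v_4v_5]$. The spokes $vv_2, vv_3, vv_4$ are edges of $G$ but not of $C$, hence each is a chord of $C$, making $C$ a chord $6$-cycle and contradicting the hypothesis. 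Thus every maximal run of $3$-faces around $v$ has length at most $3$, which is the first statement.

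It remains to upgrade \emph{at most three consecutive} to the global bound $d(v)-2$, and I would do this directly rather than through the run/gap structure. If $v$ were incident to at least $d(v)-1$ of its faces being $3$-faces, then at most one of the $d(v)$ cyclically arranged faces $f_1, \ldots, f_{d(v)}$ fails to be a $3$-face. Deleting at most one element from a cyclic sequence leaves the survivors in a single consecutive arc, so we would obtain a run of at least $d(v)-1\geq 4$ consecutively adjacent $3$-faces, contradicting the first part. Therefore at most $d(v)-2$ of the incident faces are $3$-faces. Note this is exactly where the hypothesis $d(v)\geq 5$ (so that $d(v)-1\geq 4$) is used, explaining why the statement is restricted to $5^+$-vertices.

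No step here is genuinely hard: the whole content is recognizing the chord $6$-cycle spanned by four consecutive triangles. The only points needing care are verifying that $v, v_1, \ldots, v_5$ are distinct (guaranteed by simplicity together with $d(v)\geq 5$) and checking that the passage from the local ``no four in a row'' to the global count $d(v)-2$ does not lose a unit, which the cyclic-deletion remark settles cleanly.
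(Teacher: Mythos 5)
Your proof is correct and follows essentially the same route as the paper: four consecutively adjacent $3$-faces at $v$ would force the $6$-cycle $[vv_1v_2v_3v_4v_5]$ with chord $vv_3$ (you also note the chords $vv_2, vv_4$), contradicting the hypothesis. Your cyclic-deletion argument simply spells out, correctly, the counting step from ``no four consecutive $3$-faces'' to the bound $d(v)-2$ that the paper states without detail.
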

\begin{proof}
  Suppose otherwise that  $v$ is a $5^+$-vertex incident to four consecutively adjacent 3-faces $[v_1vv_2], [v_2vv_3], [v_3vv_4], [v_4vv_5]$. Then $[v_1v_2v_3v_4v_5v]$ is a $6$-cycle with a chord $vv_3$, a contradiction.
Thus, $v$ is incident to at most  $(d(v)-2)$ 3-faces.
\end{proof}

\medskip

From Lemma~\ref{lem5} to \ref{lem6}, we assume that $G$ satisfies Assumption (\ref{e2}).

\begin{lemma}\label{lem5}
(1) $d(v)\geq3$ for all $v\in V(G)$;

(2) $G$ does not contain two adjacent $3$-vertices.
\end{lemma}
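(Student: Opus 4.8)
The plan is to mirror the argument of Lemma~\ref{lem1}, replacing the out-degree bound $3$ there by the bound $2$ required here; both parts follow at once from the minimality of $G$ under Assumption~(\ref{e2}), namely that every proper subgraph of $G$ admits a $(2,1)$-decomposition while $G$ does not. Recall that a $(2,1)$-decomposition is a pair $(D,M)$ in which $M$ is a matching of $G$ and $D$ is an acyclic orientation of $G-E(M)$ with maximum out-degree at most $2$.

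For part (1), I would suppose to the contrary that $G$ contains a vertex $v$ with $d(v)\le 2$, say $N(v)\subseteq\{v_1,v_2\}$. By minimality there is a $(2,1)$-decomposition $(D^*,M^*)$ of $G-v$. Keep the matching unchanged, $M=M^*$, and orient every edge incident with $v$ away from $v$, setting $D=D^*\cup\{\overrightarrow{vv_1},\overrightarrow{vv_2}\}$, using only the arcs corresponding to actual edges. Then $d_D^+(v)\le 2$, while all other out-degrees are unchanged, so $\Delta_D^+\le 2$. Since $v$ is a source in $D$, no directed cycle can pass through it, and therefore $D$ is still acyclic; hence $(D,M)$ is a $(2,1)$-decomposition of $G$, a contradiction.

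For part (2), I would suppose that $u$ and $v$ are adjacent $3$-vertices with $N(u)=\{u_1,u_2,v\}$ and $N(v)=\{v_1,v_2,u\}$. By minimality there is a $(2,1)$-decomposition $(D^*,M^*)$ of $G-\{u,v\}$. Place the edge $uv$ into the matching, $M=M^*\cup\{uv\}$, and orient the two remaining edges at each of $u$ and $v$ outward, $D=D^*\cup\{\overrightarrow{uu_1},\overrightarrow{uu_2},\overrightarrow{vv_1},\overrightarrow{vv_2}\}$. Because $u$ and $v$ are absent from $G-\{u,v\}$, neither meets an edge of $M^*$, so $M$ is again a matching. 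In $D$ both $u$ and $v$ are sources of out-degree exactly $2$, the out-degrees of all other vertices are unaffected, and, being sources, $u$ and $v$ lie on no directed cycle, so $D$ is acyclic with $\Delta_D^+\le 2$. Thus $(D,M)$ is a $(2,1)$-decomposition of $G$, again a contradiction.

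The only points needing care are that the two extensions preserve acyclicity and that $M$ stays a matching, and both are immediate here: the vertices reintroduced ($v$ in (1); $u$ and $v$ in (2)) are turned into sources, so they create no directed cycle, and since they were deleted from the subgraph they meet no previously chosen matching edge. I therefore expect no genuine obstacle; this lemma is simply the $(2,1)$-counterpart of Lemma~\ref{lem1}, and the same deletion-and-source reductions carry over verbatim.
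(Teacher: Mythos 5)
Your proposal is correct and matches the paper's own proof essentially verbatim: in both parts you delete the offending vertex (or adjacent pair), invoke minimality to get a $(2,1)$-decomposition of the subgraph, place $uv$ into the matching in part (2), and orient all remaining edges at the reintroduced vertices outward so they become sources. The extra verification you supply (sources create no directed cycles, and the deleted vertices meet no edge of $M^*$) is exactly what the paper leaves implicit, so there is nothing to add.
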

\begin{proof}
(1) Suppose otherwise that $v$ is a $2$-vertex and $N(v)=\{v_1,v_2\}$. By the minimality of $G$, there is a $(2,1)$-decomposition $(D^*, M^*)$ of $G-\{v\}$.
 Let $M=M^*$ and $D=D^*\cup\{\overrightarrow{vv_1},\overrightarrow{vv_2}\}$. Then $(D,M)$ is a $(2,1)$-decomposition of $G$, a contradiction.

(2) Suppose otherwise that  $u$ is a 3-vertex adjacent to a $3$-vertex $v$. Let $N(u)=\{u_1,u_2,v\}$ and $N(v)=\{v_1, v_2,u\}$. By the minimally of $G$, there is a $(2,1)$-decomposition $(D^*, M^*)$ of $G-\{u,v\}$. Let $M=M^*\cup \{uv\}$ and $D=D^*\cup\{\overrightarrow{vv_1},\overrightarrow{vv_2}, \overrightarrow{uu_1},\overrightarrow{uu_2}\}$. Then $(D,M)$ is a $(2,1)$-decomposition of $G$, a contradiction.
\end{proof}

\begin{lemma}\label{lem7}
If A NC-graph $G$ has has no 3-cycle nor 6-cycle, then it has no any underlying subgraph of $G$ in Fig.1.
\end{lemma}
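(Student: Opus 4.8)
The plan is to argue by contradiction, using the minimality of $G$ from Assumption (\ref{e2}). Suppose $G$ contains one of the configurations drawn in Fig.\,1, and let $S$ be the set of its distinguished (low-degree) vertices. By the minimality of $G$, the proper subgraph $G-S$ admits a $(2,1)$-decomposition $(D^\ast,M^\ast)$. I would then extend $(D^\ast,M^\ast)$ to a $(2,1)$-decomposition $(D,M)$ of all of $G$; exhibiting such an extension contradicts the fact that $G$ itself has no $(2,1)$-decomposition, which proves that the configuration cannot occur.

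The extension is performed locally around $S$. I retain $M=M^\ast$ and adjoin one edge at each distinguished vertex, the adjoined edges being chosen to form a matching so that $M$ has maximum degree at most $1$; the remaining edges incident to $S$ are then oriented to produce the arcs added to $D^\ast$ to form $D$. The out-degree budget of $2$ is the binding constraint. Since by Lemma \ref{lem5} every vertex has degree at least $3$ and no two $3$-vertices are adjacent, each distinguished $3$-vertex sends one edge into the matching and has its other two edges oriented outward, attaining out-degree exactly $2$; any higher-degree vertex of $S$ is arranged so that, after its matching edge is set aside and the edges it shares with other vertices of $S$ are oriented toward it, at most two of its incident edges point outward. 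Every edge joining $S$ to $V(G)\setminus S$ that is not placed in $M$ is oriented away from $S$, so no vertex of $V(G)\setminus S$ has its out-degree increased.

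Two properties must be checked. Acyclicity of $D$ is immediate: $D^\ast$ is acyclic, every arc between $S$ and $V(G)\setminus S$ leaves $S$, and the arcs internal to $S$ respect a fixed linear ordering of the vertices of $S$, so no directed cycle can close. The out-degree bound has been arranged as above. The substantive work, and the main obstacle, is the case analysis over the several subgraphs appearing in Fig.\,1. For each configuration I must (i) verify, using the absence of $3$- and $6$-cycles, that the depicted vertices are genuinely distinct and that no unwanted chord is present, so that the intended neighbors really are available targets for the out-arcs and for the matching edges; and (ii) exhibit an explicit compatible choice of matching edges and orientations meeting the out-degree-$2$ bound at every vertex of $S$ simultaneously, with the matching edges selected at the $3$-vertices of $S$ having pairwise distinct endpoints. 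Ruling out the short-cycle coincidences (repeated neighbors, forbidden chords, or a short cycle passing through $S$) that would otherwise break the construction is precisely where the two girth hypotheses are consumed, and assembling one globally consistent choice across all distinguished vertices at once is the delicate part.
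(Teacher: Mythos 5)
Your framework is exactly the paper's: delete the set $X$ of labeled vertices of the configuration, invoke the minimality assumption (\ref{e2}) to get a $(2,1)$-decomposition $(D^*,M^*)$ of $G-X$, enlarge $M^*$ by a matching $M'$ of edges inside the configuration, orient the remaining internal edges acyclically within the out-degree budget, and orient every edge between $X$ and $V\setminus X$ away from $X$ so that no outside vertex gains out-degree and no directed cycle can re-enter $X$. The acyclicity argument you sketch (arcs leave $X$, internal arcs follow a linear order) is also the same as the one implicit in the paper. However, the entire mathematical content of Lemma \ref{lem7} lies in exhibiting, for each of the seven configurations of Fig.~1, a concrete matching $M'$ and a concrete acyclic orientation $D'$ satisfying $\Delta_D^+\le 2$ at every vertex of $X$ simultaneously; the paper does precisely this, listing $M'$ and $D'$ case by case. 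You explicitly defer this step, calling it ``the substantive work'' and ``the delicate part,'' and never carry it out. What you have written is therefore a proof template rather than a proof: the feasibility of the local extension is exactly the point at issue, and it is not granted by the general scheme.

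Worse, the blanket recipe you offer in place of the case analysis is not executable as stated. Any matching edge you adjoin must have \emph{both} endpoints in $X$, since an endpoint in $V\setminus X$ might already be saturated by $M^*$; combined with your requirement that one matching edge be adjoined ``at each distinguished vertex,'' this would force a perfect matching on $X$, which is impossible for the configurations with $|X|=11$ (configurations (1)--(4) and (7)). Indeed, in the paper's solutions some vertices are deliberately left unmatched ($v_4$ in configuration (1), or $v_3$ and $v_9$ in configuration (5)) and their out-degree budget is met instead by directing internal edges \emph{into} them---a device absent from your recipe, under which every $3$-vertex sends two arcs out. Likewise, your claim that any higher-degree vertex of $X$ ``is arranged'' to have at most two outgoing edges cannot hold in general: a $4$-vertex of $X$ with three edges to $V\setminus X$ would have out-degree at least $3$ no matter how its one internal edge is handled, since all external edges must point outward. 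So whether the out-degree-$2$ bound can be met is a property of each drawn configuration, to be verified figure by figure; that verification is what the paper's explicit lists of $M'$ and $D'$ supply and what your write-up omits.
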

\begin{proof}
 Suppose otherwise that $G$ contains one of the figures in Fig.1. Let $X$ be  all the labeled vertices of each figure. By the minimality of $G$, $G^*=G-X$ has a $(2,1)$-decomposition $(D^*, M^*)$.

In Fig.1 (1),  $X=\{v_1, \ldots, v_{11}\}$. Let $M'=\{v_1v_5, v_2v_3, v_6v_7,v_8v_9, v_{10}v_{11}\}$ and $D'=\{
\overrightarrow{v_1v_2},\overrightarrow{v_1v_7},
\\ \overrightarrow{v_2v_9}, \overrightarrow{v_3v_4},\overrightarrow{v_3v_8},\overrightarrow{v_4v_5},\overrightarrow{v_5v_6},
\overrightarrow{v_8v_{10}}, \overrightarrow{v_{11}v_4}\}$.
In Fig.1 (2),  $X=\{v_1, \ldots, v_{11}\}$. Let $M'=\{v_1v_5, v_2v_3, v_6v_7,\\ v_8v_9, v_4v_{11}\}$ and $D'=\{
 \overrightarrow{v_1v_2},\overrightarrow{v_1v_7},
\overrightarrow{v_2v_9},\overrightarrow{v_3v_4},\overrightarrow{v_3v_8},\overrightarrow{v_4v_5},\overrightarrow{v_5v_6},
\overrightarrow{v_8 v_{10}},\overrightarrow{v_{10}v_{11}}\}$.
In Fig.1 (3),  $X=\{v_1, \ldots, v_{11}\}$. Let $M'=\{v_1v_5,v_6v_7,v_2v_{11},v_4v_8, v_9v_{10}\}$ and $D'=\{\overrightarrow{v_1v_2}, \overrightarrow{v_1v_7},
\overrightarrow{v_2v_3},\overrightarrow{v_3v_4},\overrightarrow{v_3v_9},\overrightarrow{v_4v_5},\\ \overrightarrow{v_5v_6},
\overrightarrow{v_9v_8}, \overrightarrow{v_{11}v_{10}}\}$.
In Fig.1 (4),   $X=\{v_1, \ldots, v_{11}\}$. Let $M'=\{v_1v_5,v_6v_7,v_2v_{11},v_4v_8, v_9v_{10}\}$ and $D'=\{
\overrightarrow{v_1v_2}, \overrightarrow{v_1v_7},
\overrightarrow{v_2v_3},\overrightarrow{v_3v_4},\overrightarrow{v_3v_9},\overrightarrow{v_4v_5},\overrightarrow{v_5v_6},
\overrightarrow{v_9v_8}, \overrightarrow{v_{10}v_{11}}\}$.
 In Fig.1 (5),  $X=\{v_1,v_2, \ldots,v_7, v_9\}$. Let $M'=\{ v_1v_2, v_4v_5, v_6v_7\}$ and $D'=\{
\overrightarrow{v_1v_5},\overrightarrow{v_1v_7},\overrightarrow{v_2v_6},
 \overrightarrow{v_2v_9}, \overrightarrow{v_3v_2},\overrightarrow{v_3v_4},\overrightarrow{v_7v_3},\overrightarrow{v_7v_9}\}$.
In Fig.1 (6),  $X=\{v_1, \ldots, v_9\}$. Let $M'=\{v_1v_5, v_3v_4,v_6v_7, v_8v_9\}$ and $D'=\{\overrightarrow{v_1v_2}, \overrightarrow{v_1v_7},
\overrightarrow{v_2v_6}, \overrightarrow{v_3v_2},\overrightarrow{v_3v_9},\overrightarrow{v_4v_8},\overrightarrow{v_5v_4},
\\ \overrightarrow{v_5v_9},\overrightarrow{v_6v_5}\}$.
In Fig.1 (7),  $X=\{v_1, \ldots, v_{11}\}$. Let
$M'=\{v_2v_3, v_4v_5, v_6v_7,v_8v_9, v_{10}v_{11}\}$ and $D'=\{ \overrightarrow{v_1v_5},\overrightarrow{v_1v_7},\overrightarrow{v_2v_1},
\overrightarrow{v_2v_6},\overrightarrow{v_3v_4},
\overrightarrow{v_3v_9}, \overrightarrow{v_4v_8},\overrightarrow{v_9v_{10}},\overrightarrow{v_{11}v_2}\}$.

Let $M=M^*\cup M'$ and $D$ be the orientation of $G-M$ obtained by adding arcs in $D'$ and
all the edges between $X$ and $V\setminus X$  oriented from $X$ to $V\setminus X$. Then $\Delta(M)\le 1$ and $\Delta_D^+\le2$.
Moreover, $D$ is an acyclic orientation of $G-M$. Thus $(D,M)$ is a $(2,1)$-decomposition of $G$, a contradiction.
\end{proof}

 \vskip -2cm
\unitlength=0.30mm
\begin{picture}(10,20)(0,0)
\put(50, -100){\makebox(0,0){$\bullet$}}
\put(55, -101){\scriptsize {\em $v_1$}}
\put(50, -100){\vector(2,-1){20}}
\put(50, -100){\line(-2,-1){30}}
\put(50, -100){\line(2,-1){30}}
\put(50, -100){\vector(0,1){20}}
\put(50, -100){\line(0,1){31}}
\put(50, -70){\makebox(0,0){$\bullet$}}
\put(55, -71){\scriptsize {\em $v_7$}}
\put(50, -70){\vector(-2,1){15}}
\put(50, -70){\vector(0,1){15}}
\put(20, -115){\makebox(0,0){$\bullet$}}
\put(13, -123){\scriptsize {\em $v_5$}}
\put(20, -115){\vector(-2,-1){15}}
\put(20, -115){\line(1,-2){12}}
\put(20, -115){\line(0,1){32}}
\put(20, -115){\vector(0,1){17}}
\put(20, -85){\makebox(0,0){$\bullet$}}
\put(10, -93){\scriptsize {\em $v_6$}}
\put(30, -80){\scriptsize {\em $M$}}
\put(30, -110){\scriptsize {\em $M$}}
\put(69, -128){\scriptsize {\em $M$}}
\put(97, -140){\scriptsize {\em $M$}}
\put(48, -173){\scriptsize {\em $M$}}
\put(20, -85){\line(2,1){30}}
\put(20, -85){\vector(-2,1){15}}
\put(20, -85){\vector(0,1){15}}
\put(80, -115){\makebox(0,0){$\bullet$}}
\put(80, -123){\scriptsize {\em $v_2$}}
\put(80, -115){\vector(0,1){20}}
\put(80, -115){\vector(1,0){15}}
\put(80, -115){\line(1,0){24}}
\put(80, -115){\line(-1,-2){12}}
\put(68, -138){\makebox(0,0){$\bullet$}}
\put(57, -135){\scriptsize {\em $v_3$}}
\put(32, -138){\makebox(0,0){$\bullet$}}
\put(68, -138){\vector(-1,0){23}}
\put(68, -138){\vector(2,-1){23}}
\put(23, -148){\scriptsize {\em $v_4$}}
\put(32, -138){\vector(-1,2){7}}
\put(32, -138){\line(0,-1){33}}
\put(32, -138){\vector(-2,-1){15}}
\put(32, -138){\line(1,0){35}}
\put(68, -138){\line(2,-1){33}}
\put(102, -154){\line(0,1){45}}
\put(102, -154){\vector(1,0){20}}
\put(102, -154){\makebox(0,0){$\bullet$}}
\put(96, -162){\scriptsize {\em $v_8$}}
\put(102, -154){\line(-2,-1){33}}
\put(102, -154){\vector(-2,-1){23}}
\put(102, -115){\makebox(0,0){$\bullet$}}
\put(103, -123){\scriptsize {\em $v_9$}}
\put(102, -115){\vector(1,0){20}}
\put(102, -115){\vector(0,1){20}}
\put(68, -170){\makebox(0,0){$\bullet$}}
\put(57, -165){\scriptsize {\em $v_{10}$}}
\put(68, -170){\vector(2,-1){12}}
\put(68, -170){\vector(0,-1){12}}
\put(32, -170){\makebox(0,0){$\bullet$}}
\put(35, -165){\scriptsize {\em $v_{11}$}}
\put(32, -170){\vector(-2,-1){15}}
\put(32, -170){\vector(0,1){18}}
\put(32, -170){\line(1,0){35}}


\put(180, -100){\makebox(0,0){$\bullet$}}
\put(185, -101){\scriptsize {\em $v_1$}}
\put(180, -100){\vector(2,-1){20}}
\put(180, -100){\line(-2,-1){30}}
\put(180, -100){\line(2,-1){30}}
\put(180, -100){\vector(0,1){20}}
\put(180, -100){\line(0,1){31}}
\put(180, -70){\makebox(0,0){$\bullet$}}
\put(185, -71){\scriptsize {\em $v_7$}}
\put(180, -70){\vector(-2,1){15}}
\put(180, -70){\vector(0,1){15}}
\put(150, -115){\makebox(0,0){$\bullet$}}
\put(143, -123){\scriptsize {\em $v_5$}}
\put(150, -115){\vector(-2,-1){15}}
\put(150, -115){\line(1,-2){12}}
\put(150, -115){\line(0,1){32}}
\put(150, -115){\vector(0,1){17}}
\put(150, -85){\makebox(0,0){$\bullet$}}
\put(140, -93){\scriptsize {\em $v_6$}}
\put(160, -80){\scriptsize {\em $M$}}
\put(160, -110){\scriptsize {\em $M$}}
\put(199, -128){\scriptsize {\em $M$}}
\put(227, -140){\scriptsize {\em $M$}}
\put(158, -161){\scriptsize {\em $M$}}
\put(150, -85){\line(2,1){30}}
\put(150, -85){\vector(-2,1){15}}
\put(150, -85){\vector(0,1){15}}
\put(210, -115){\makebox(0,0){$\bullet$}}
\put(210, -123){\scriptsize {\em $v_2$}}
\put(210, -115){\vector(0,1){20}}
\put(210, -115){\vector(1,0){15}}
\put(210, -115){\line(1,0){24}}
\put(210, -115){\line(-1,-2){12}}
\put(198, -138){\makebox(0,0){$\bullet$}}
\put(187, -135){\scriptsize {\em $v_3$}}
\put(162, -138){\makebox(0,0){$\bullet$}}
\put(198, -138){\vector(-1,0){23}}
\put(198, -138){\vector(2,-1){23}}
\put(153, -148){\scriptsize {\em $v_4$}}
\put(162, -138){\vector(-1,2){7}}
\put(162, -138){\line(0,-1){33}}
\put(162, -138){\vector(-2,-1){15}}
\put(162, -138){\line(1,0){35}}
\put(198, -138){\line(2,-1){33}}
\put(232, -154){\line(0,1){45}}
\put(232, -154){\vector(1,0){20}}
\put(232, -154){\makebox(0,0){$\bullet$}}
\put(230, -162){\scriptsize {\em $v_8$}}
\put(232, -154){\line(-2,-1){33}}
\put(232, -154){\vector(-2,-1){23}}
\put(232, -115){\makebox(0,0){$\bullet$}}
\put(233, -123){\scriptsize {\em $v_9$}}
\put(232, -115){\vector(1,0){20}}
\put(232, -115){\vector(0,1){20}}
\put(198, -170){\makebox(0,0){$\bullet$}}
\put(187, -165){\scriptsize {\em $v_{10}$}}
\put(198, -170){\vector(0,-1){12}}
\put(198, -170){\vector(-1,0){15}}
\put(162, -170){\makebox(0,0){$\bullet$}}
\put(165, -175){\scriptsize {\em $v_{11}$}}
\put(162, -170){\vector(0,-1){12}}
\put(162, -170){\vector(-1,0){12}}
\put(162, -170){\line(1,0){35}}

\put(58, -198){\scriptsize {\em $(1)$}}
\put(188, -198){\scriptsize {\em $(2)$}}
\put(318, -198){\scriptsize {\em $(3)$}}
\put(448, -198){\scriptsize {\em $(4)$}}
\put(58, -358){\scriptsize {\em $(5)$}}
\put(198, -358){\scriptsize {\em $(6)$}}
\put(318, -358){\scriptsize {\em $(7)$}}
\put(178, -378){\scriptsize { Fig. 1: Reducible configurations}}

\put(310, -100){\makebox(0,0){$\bullet$}}
\put(315, -101){\scriptsize {\em $v_1$}}
\put(310, -100){\vector(2,-1){20}}
\put(310, -100){\line(-2,-1){30}}
\put(310, -100){\line(2,-1){30}}
\put(310, -100){\vector(0,1){20}}
\put(310, -100){\line(0,1){31}}
\put(310, -70){\makebox(0,0){$\bullet$}}
\put(315, -71){\scriptsize {\em $v_7$}}
\put(310, -70){\vector(-2,1){15}}
\put(310, -70){\vector(0,1){15}}
\put(280, -115){\makebox(0,0){$\bullet$}}
\put(273, -123){\scriptsize {\em $v_5$}}
\put(280, -115){\vector(-2,-1){15}}
\put(280, -115){\line(1,-2){12}}
\put(280, -115){\line(0,1){32}}
\put(280, -115){\vector(0,1){17}}
\put(280, -85){\makebox(0,0){$\bullet$}}
\put(270, -93){\scriptsize {\em $v_6$}}
\put(290, -80){\scriptsize {\em $M$}}
\put(290, -110){\scriptsize {\em $M$}}
\put(349, -130){\scriptsize {\em $M$}}
\put(338, -173){\scriptsize {\em $M$}}
\put(286, -160){\scriptsize {\em $M$}}
\put(280, -85){\line(2,1){30}}
\put(280, -85){\vector(-2,1){15}}
\put(280, -85){\vector(0,1){15}}
\put(340, -115){\makebox(0,0){$\bullet$}}
\put(345, -118){\scriptsize {\em $v_2$}}
\put(340, -115){\vector(0,1){20}}
\put(340, -115){\vector(-1,-2){7}}
\put(340, -115){\line(1,-1){22}}
\put(340, -115){\line(-1,-2){12}}
\put(328, -138){\makebox(0,0){$\bullet$}}
\put(317, -135){\scriptsize {\em $v_3$}}
\put(328, -138){\line(0,-1){30}}
\put(292, -138){\makebox(0,0){$\bullet$}}
\put(328, -138){\vector(-1,0){23}}
\put(328, -138){\vector(0,-1){23}}
\put(283, -148){\scriptsize {\em $v_4$}}
\put(292, -138){\vector(-1,2){7}}
\put(292, -138){\line(0,-1){33}}
\put(292, -138){\vector(-2,-1){15}}
\put(292, -138){\line(1,0){35}}
\put(362, -170){\makebox(0,0){$\bullet$}}
\put(348, -167){\scriptsize {\em $v_{10}$}}
\put(362, -170){\vector(0,-1){12}}
\put(362, -170){\vector(1,0){12}}
\put(362, -138){\makebox(0,0){$\bullet$}}
\put(364, -144){\scriptsize {\em $v_{11}$}}
\put(362, -138){\line(0,-1){31}}
\put(362, -138){\vector(1,1){13}}
\put(362, -138){\vector(0,-1){20}}
\put(328, -170){\makebox(0,0){$\bullet$}}
\put(317, -167){\scriptsize {\em $v_9$}}
\put(328, -170){\vector(0,-1){12}}
\put(328, -170){\vector(-1,0){20}}
\put(328, -170){\line(1,0){33}}
\put(292, -170){\makebox(0,0){$\bullet$}}
\put(295, -167){\scriptsize {\em $v_8$}}
\put(292, -170){\vector(0,-1){12}}
\put(292, -170){\vector(-1,0){12}}
\put(292, -170){\line(1,0){35}}


\put(440, -100){\makebox(0,0){$\bullet$}}
\put(445, -101){\scriptsize {\em $v_1$}}
\put(440, -100){\vector(2,-1){20}}
\put(440, -100){\line(-2,-1){30}}
\put(440, -100){\line(2,-1){30}}
\put(440, -100){\vector(0,1){20}}
\put(440, -100){\line(0,1){31}}
\put(440, -70){\makebox(0,0){$\bullet$}}
\put(445, -71){\scriptsize {\em $v_7$}}
\put(440, -70){\vector(-2,1){15}}
\put(440, -70){\vector(0,1){15}}
\put(410, -115){\makebox(0,0){$\bullet$}}
\put(403, -123){\scriptsize {\em $v_5$}}
\put(410, -115){\vector(-2,-1){15}}
\put(410, -115){\line(1,-2){12}}
\put(410, -115){\line(0,1){32}}
\put(410, -115){\vector(0,1){17}}
\put(410, -85){\makebox(0,0){$\bullet$}}
\put(400, -93){\scriptsize {\em $v_6$}}
\put(420, -80){\scriptsize {\em $M$}}
\put(420, -110){\scriptsize {\em $M$}}
\put(479, -130){\scriptsize {\em $M$}}
\put(468, -173){\scriptsize {\em $M$}}
\put(416, -160){\scriptsize {\em $M$}}
\put(410, -85){\line(2,1){30}}
\put(410, -85){\vector(-2,1){15}}
\put(410, -85){\vector(0,1){15}}
\put(470, -115){\makebox(0,0){$\bullet$}}
\put(475, -118){\scriptsize {\em $v_2$}}
\put(470, -115){\vector(0,1){20}}
\put(470, -115){\vector(-1,-2){7}}
\put(470, -115){\line(1,-1){22}}
\put(470, -115){\line(-1,-2){12}}
\put(458, -138){\makebox(0,0){$\bullet$}}
\put(447, -135){\scriptsize {\em $v_3$}}
\put(458, -138){\line(0,-1){30}}
\put(422, -138){\makebox(0,0){$\bullet$}}
\put(458, -138){\vector(-1,0){23}}
\put(458, -138){\vector(0,-1){23}}
\put(413, -148){\scriptsize {\em $v_4$}}
\put(422, -138){\vector(-1,2){7}}
\put(422, -138){\line(0,-1){33}}
\put(422, -138){\vector(-2,-1){15}}
\put(422, -138){\line(1,0){35}}
\put(492, -170){\makebox(0,0){$\bullet$}}
\put(478, -167){\scriptsize {\em $v_{10}$}}
\put(492, -170){\vector(0,1){23}}
\put(492, -170){\vector(1,-1){12}}
\put(492, -138){\makebox(0,0){$\bullet$}}
\put(478, -144){\scriptsize {\em $v_{11}$}}
\put(492, -138){\line(0,-1){31}}
\put(492, -138){\vector(2,1){15}}
\put(492, -138){\vector(0,1){15}}
\put(458, -170){\makebox(0,0){$\bullet$}}
\put(447, -167){\scriptsize {\em $v_9$}}
\put(458, -170){\vector(0,-1){12}}
\put(458, -170){\vector(-1,0){23}}
\put(458, -170){\line(1,0){33}}
\put(422, -170){\makebox(0,0){$\bullet$}}
\put(425, -167){\scriptsize {\em $v_8$}}
\put(422, -170){\vector(-1,0){12}}
\put(422, -170){\vector(0,-1){12}}
\put(422, -170){\line(1,0){37}}


\put(50, -260){\makebox(0,0){$\bullet$}}
\put(38, -258){\scriptsize {\em $v_1$}}
\put(50, -260){\vector(-2,-1){20}}
\put(50, -260){\line(-2,-1){30}}
\put(50, -260){\line(2,-1){30}}
\put(50, -260){\vector(1,1){12}}
\put(50, -260){\line(1,1){21}}
\put(119, -237){\makebox(0,0){$\bullet$}}
\put(115, -245){\scriptsize {\em $v_6$}}
\put(119, -237){\vector(-2,1){15}}
\put(119, -237){\vector(0,1){15}}
\put(72, -237){\makebox(0,0){$\bullet$}}
\put(60, -235){\scriptsize {\em $v_7$}}
\qbezier(68, -321)(140, -275)(72, -237)
\qbezier(110, -305)(135, -245)(72, -237)
\put(72, -237){\line(1,0){45}}

\put(22, -305){\scriptsize {\em $M$}}
\put(58, -270){\scriptsize {\em $M$}}
\put(90, -240){\scriptsize {\em $M$}}
\put(20, -275){\makebox(0,0){$\bullet$}}
\put(13, -283){\scriptsize {\em $v_5$}}
\put(20, -275){\vector(-2,-1){15}}
\put(20, -275){\line(1,-4){12}}
\put(20, -275){\vector(-1,1){12}}
\put(80, -275){\makebox(0,0){$\bullet$}}
\put(68, -280){\scriptsize {\em $v_2$}}
\put(80, -275){\line(1,-1){30}}
\put(80, -275){\vector(1,-1){15}}
\put(80, -275){\vector(1,1){15}}
\put(80, -275){\line(1,1){37}}
\put(80, -275){\line(-1,-4){12}}
\put(105, -275){\vector(0,-1){5}}
\put(117, -275){\vector(0,-1){5}}
\put(68, -321){\makebox(0,0){$\bullet$}}
\put(57, -330){\scriptsize {\em $v_3$}}
\put(32, -321){\makebox(0,0){$\bullet$}}
\put(68, -321){\vector(-1,0){23}}
\put(68, -321){\vector(1,4){7}}
\put(25, -330){\scriptsize {\em $v_4$}}
\put(32, -321){\vector(-1,1){12}}
\put(32, -321){\vector(-2,-1){15}}
\put(32, -321){\line(1,0){35}}
\put(110, -305){\makebox(0,0){$\bullet$}}
\put(105, -315){\scriptsize {\em $v_9$}}
\put(110, -305){\vector(1,0){20}}
\put(110, -305){\vector(1,-1){14}}


\put(200, -260){\makebox(0,0){$\bullet$}}
\put(205, -261){\scriptsize {\em $v_1$}}
\put(200, -260){\vector(2,-1){20}}
\put(200, -260){\line(-2,-1){30}}
\put(200, -260){\line(2,-1){30}}
\put(200, -260){\vector(0,1){20}}
\put(200, -260){\line(0,1){31}}
\put(190, -249){\vector(-4,1){2}}

\put(182, -240){\scriptsize {\em $M$}}
\put(182, -270){\scriptsize {\em $M$}}
\put(193, -300){\scriptsize {\em $M$}}
\put(193, -332){\scriptsize {\em $M$}}
\put(200, -230){\makebox(0,0){$\bullet$}}
\put(205, -231){\scriptsize {\em $v_7$}}
\put(200, -230){\vector(-2,1){15}}
\put(200, -230){\vector(0,1){15}}
\put(170, -275){\makebox(0,0){$\bullet$}}
\put(158, -277){\scriptsize {\em $v_5$}}
\put(170, -275){\vector(1,-2){8}}
\put(170, -275){\line(1,-2){12}}
\put(170, -275){\line(0,1){32}}
\put(170, -245){\makebox(0,0){$\bullet$}}
\put(160, -253){\scriptsize {\em $v_6$}}
\put(170, -245){\line(2,1){30}}
\put(170, -245){\vector(-2,1){15}}
\put(170, -245){\vector(0,-1){15}}

\put(230, -275){\makebox(0,0){$\bullet$}}
\put(230, -283){\scriptsize {\em $v_2$}}
\put(230, -275){\vector(1,0){15}}
\put(230, -275){\line(-1,-2){12}}
\put(218, -298){\makebox(0,0){$\bullet$}}
\put(207, -295){\scriptsize {\em $v_3$}}
\put(182, -298){\makebox(0,0){$\bullet$}}
\put(218, -298){\vector(1,2){7}}
\put(218, -298){\vector(0,-1){23}}
\put(173, -308){\scriptsize {\em $v_4$}}
\put(152, -313){\vector(-1,-4){2}}
\put(182, -298){\vector(0,-1){20}}
\put(182, -298){\line(0,-1){33}}
\put(182, -298){\vector(-2,-1){15}}
\put(182, -298){\line(1,0){35}}
\put(218, -298){\line(0,-1){33}}

\put(218, -330){\makebox(0,0){$\bullet$}}
\put(207, -325){\scriptsize {\em $v_9$}}
\put(218, -330){\vector(1,-1){12}}
\put(182, -330){\makebox(0,0){$\bullet$}}
\put(185, -325){\scriptsize {\em $v_8$}}
\put(182, -330){\vector(-2,-1){15}}
\put(182, -330){\vector(-1,1){13}}
\put(182, -330){\line(1,0){35}}
\qbezier(218, -330)(113, -368)(170, -275)
\qbezier(230, -275)(250, -260)(170, -245)


\put(310, -260){\makebox(0,0){$\bullet$}}
\put(315, -261){\scriptsize {\em $v_1$}}
\put(310, -260){\vector(-2,-1){20}}
\put(310, -260){\line(-2,-1){30}}
\put(310, -260){\line(2,-1){30}}
\put(310, -260){\vector(0,1){20}}
\put(310, -260){\line(0,1){31}}

\put(320, -241){\scriptsize {\em $M$}}
\put(282, -291){\scriptsize {\em $M$}}
\put(329, -291){\scriptsize {\em $M$}}
\put(302, -335){\scriptsize {\em $M$}}
\put(358, -317){\scriptsize {\em $M$}}

\put(310, -230){\makebox(0,0){$\bullet$}}
\put(315, -231){\scriptsize {\em $v_7$}}
\put(310, -230){\vector(-2,1){15}}
\put(310, -230){\vector(0,1){15}}
\put(280, -275){\makebox(0,0){$\bullet$}}
\put(273, -283){\scriptsize {\em $v_5$}}
\put(280, -275){\vector(-2,-1){15}}
\put(280, -275){\line(1,-2){12}}
\put(280, -275){\vector(0,1){17}}
\put(340, -245){\makebox(0,0){$\bullet$}}
\put(345, -250){\scriptsize {\em $v_6$}}
\put(340, -245){\line(-2,1){28}}
\put(340, -245){\vector(2,1){15}}
\put(340, -245){\vector(0,1){15}}
\put(340, -275){\makebox(0,0){$\bullet$}}
\put(345, -280){\scriptsize {\em $v_2$}}
\put(340, -275){\vector(0,1){20}}
\put(340, -275){\vector(-2,1){17}}
\put(340, -275){\line(0,1){33}}
\put(340, -275){\line(1,-1){22}}
\put(340, -275){\line(-1,-2){12}}
\put(328, -298){\makebox(0,0){$\bullet$}}
\put(317, -295){\scriptsize {\em $v_3$}}
\put(328, -298){\line(0,-1){30}}
\put(292, -298){\makebox(0,0){$\bullet$}}
\put(328, -298){\vector(-1,0){23}}
\put(328, -298){\vector(0,-1){23}}
\put(283, -308){\scriptsize {\em $v_4$}}
\put(292, -298){\vector(0,-1){19}}
\put(292, -298){\line(0,-1){33}}
\put(292, -298){\vector(-2,-1){15}}
\put(292, -298){\line(1,0){35}}
\put(362, -330){\makebox(0,0){$\bullet$}}
\put(346, -328){\scriptsize {\em $v_{10}$}}
\put(362, -330){\vector(1,0){12}}
\put(362, -330){\vector(0,-1){12}}
\put(362, -298){\makebox(0,0){$\bullet$}}
\put(364, -304){\scriptsize {\em $v_{11}$}}
\put(362, -298){\line(0,-1){31}}
\put(362, -298){\vector(1,1){13}}
\put(362, -298){\vector(-1,1){13}}
\put(328, -330){\makebox(0,0){$\bullet$}}
\put(317, -328){\scriptsize {\em $v_9$}}
\put(328, -330){\vector(0,-1){12}}
\put(328, -330){\vector(1,0){18}}
\put(328, -330){\line(1,0){33}}
\put(292, -330){\makebox(0,0){$\bullet$}}
\put(293, -328){\scriptsize {\em $v_8$}}
\put(292, -330){\vector(0,-1){12}}
\put(292, -330){\vector(-1,0){12}}
\put(292, -330){\line(1,0){35}}
\end{picture}

 \vskip 12cm

\begin{lemma}\label{lem6} A NC-graph $G\in {\cal H}$ has no
 a $(3,4,3,4)$-face,
\end{lemma}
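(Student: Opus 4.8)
The plan is to argue by contradiction against the minimality in Assumption~(\ref{e2}). First I would observe that a $(3,4,3,4)$-face is a $4$-face whose boundary is a $4$-cycle; hence in the subcases of $\cal H$ that forbid the pair $\{3,4\}$ or $\{4,6\}$ no such face can occur, and the claim is immediate. So the only substantive case is when $G$ has no $3$- and no $6$-cycles, and there I may freely use that $G$ is triangle-free. Suppose $f=[u_1u_2u_3u_4]$ is a $(3,4,3,4)$-face with $d(u_1)=d(u_3)=3$ and $d(u_2)=d(u_4)=4$. Since $G$ has no $3$-cycle, the diagonals $u_1u_3$ and $u_2u_4$ are non-edges, so the boundary is an induced $4$-cycle; I write $N(u_1)=\{u_2,u_4,w_1\}$, $N(u_3)=\{u_2,u_4,w_3\}$, $N(u_2)=\{u_1,u_3,a,b\}$ and $N(u_4)=\{u_1,u_3,c,d\}$, and note (again by triangle-freeness) that none of $w_1,w_3,a,b,c,d$ lies in $\{u_1,u_2,u_3,u_4\}$.

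Next I would delete the whole face: by the minimality of $G$ the graph $G-\{u_1,u_2,u_3,u_4\}$ has a $(2,1)$-decomposition $(D^*,M^*)$, and the task is to extend it. The key choice is to put the two opposite boundary edges into the matching, $M=M^*\cup\{u_1u_2,u_3u_4\}$, and to take $D$ to be $D^*$ together with the six external arcs $\overrightarrow{u_1w_1},\overrightarrow{u_3w_3},\overrightarrow{u_2a},\overrightarrow{u_2b},\overrightarrow{u_4c},\overrightarrow{u_4d}$ pointing out of the face, plus the two leftover boundary edges oriented into the $4$-vertices, namely $\overrightarrow{u_3u_2}$ and $\overrightarrow{u_1u_4}$. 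The point of this recipe is the out-degree bookkeeping: each $4$-vertex spends its budget of $2$ on its two external neighbours ($u_2$ on $a,b$; $u_4$ on $c,d$) and receives its two boundary edges as one matching edge and one in-arc, while each $3$-vertex uses one external out-arc, one boundary out-arc, and one matching edge. One checks directly that $u_1,u_2,u_3,u_4$ all attain out-degree exactly $2$, and that every vertex outside the face receives only new in-arcs and so keeps its out-degree from $D^*$.

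Finally I would verify the two remaining requirements. The matching $M$ has maximum degree $1$ because $u_1u_2$ and $u_3u_4$ are vertex-disjoint and their endpoints, having been deleted, meet no edge of $M^*$. For acyclicity, the only new arcs among the deleted vertices are $\overrightarrow{u_3u_2}$ and $\overrightarrow{u_1u_4}$, which form no cycle, and every other new arc leaves the face; since no arc enters the face, a directed cycle can neither be confined to $\{u_1,u_2,u_3,u_4\}$ nor cross between the face and its complement, so $D$ inherits acyclicity from $D^*$. Thus $(D,M)$ is a $(2,1)$-decomposition of $G$, contradicting Assumption~(\ref{e2}). The one delicate point, and the step I would watch most carefully, is the orientation of the two leftover boundary edges: they must be sent into the degree-$4$ vertices, whose out-budget is already exhausted, rather than into the degree-$3$ vertices, and it is exactly this choice (together with the disjointness of the two matched edges from $M^*$) that keeps all four out-degrees at $2$ without any clash with the matching on the deleted part.
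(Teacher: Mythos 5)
Your proof is correct and is essentially identical to the paper's: the same deletion of the four face vertices, the same matching edges $u_1u_2,u_3u_4$ (the paper's $v_1v_2,v_3v_4$), the same outward arcs to external neighbours, and the same orientation of the two leftover boundary edges into the $4$-vertices ($\overrightarrow{u_3u_2},\overrightarrow{u_1u_4}$ versus the paper's $\overrightarrow{v_3v_2},\overrightarrow{v_1v_4}$). Your only additions---reducing to the triangle-free case and checking that the boundary is induced---are small refinements the paper leaves implicit.
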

\begin{proof}
Suppose otherwise that $G$ has a $(3,4,3,4)$-face $[v_1v_2v_3v_4]$. Let $N(v_1)=\{v_{11},v_2, v_4\}, N(v_2)\\=\{v_{21}, v_{22}, v_1, v_3\}, N(v_3)=\{v_{31}, v_2, v_4\}$ and $N(v_4)=\{v_{41}, v_{42}, v_1, v_3\}$.  By the minimality of $G$, there is a $(2,1)$-decomposition $(D^*, M^*)$ of
$G-\{v_1, v_2, v_3, v_4\}$.
  Let $M=M^*\cup \{v_1v_2, v_3v_4\}$ and $D=D^*\cup\{\overrightarrow{v_1v_{11}}, \overrightarrow{v_2v_{21}}, \overrightarrow{v_2v_{22}}, \overrightarrow{v_3v_{31}}, \overrightarrow{v_4v_{41}}, \overrightarrow{v_4v_{42}}, \overrightarrow{v_1v_4}, \overrightarrow{v_3v_2}\}$. Then $(D,M)$ is a $(2,1)$-decomposition of $G$, a contradiction.
\end{proof}

\section{Proofs of Theorem \ref{th0} and \ref{th1}}

We are now ready to  complete the proof of Theorem~\ref{th0} and \ref{th1}.  We define initial charge  $\mu(x)=d(x)-4$ for each $x\in V\cup F$. By Euler's Formula $|V(G)|+|F(G)|-|E(G)|\geq 0$,
\[
\sum_{v\in V(G)}(d(v)-4)+\sum_{f\in F(G)}(d(f)-4)\leq 0.
\]
Let $\mu'(x)$ be the charge of $x\in V(G)\cup F(G)$ after the discharge procedure. In order to prove the Theorems \ref{th0} and \ref{th1}, we shall design some discharging rules so that after discharging. Since the total sum of weights is kept unchanged,  the new weight function $\mu'$ satisfies

(I) $\mu'(x)\geq0$ for all $x\in V(G)\cup F(G)$;

(II) There exists some $x^*\in V(G)\cup F(G)$ such that $\mu'(x^*)>0$.

Thus
\[
0<\sum_{x\in V(G)\cup F(G)}\mu'(x)= \sum_{x\in V(G)\cup F(G)}\mu(x)=0.
\]
This contradiction completes our proofs.

\subsection{Proofs of Theorem~\ref{th0}(1) and (3).}

In this section, we prove Theorem~\ref{th0}(1) and (3).
Now we define the  discharge rules as follows.
\medskip

\begin{enumerate}[(R1)]
\item Every $5$-vertex sends $\frac{1}{3}$ to each incident $(5^+,5^+,5^+)$-face, $\frac{1}{2}$ to each incident $(4,5,5)$-face and $\frac{5}{12}$ to each incident $(4,5,6^+)$-face.

\item Every $6^+$-vertex sends $\frac{7}{12}$ to each incident 3-face.

\item Every $5^+$-face sends $\frac{11}{60}$ to each incident vertex.
\end{enumerate}

\medskip

It suffices to show that the new weight function $\mu'$ satisfies Properties (I) and (II).

We first check $\mu'(v)\geq0$ for all $v\in V(G)$. By Lemma \ref{lem1} (1), $d(v)\geq4$.
\begin{enumerate}
\item $d(v)=4$. Since no 4-vertex is involved in the discharge procedure, $\mu'(v)=\mu(v)=4-4=0$.

 \item $d(v)=5$. Then $\mu(v)=1$. By Lemma \ref{lem3}, $n_3(v)\le3$. If $n_3(v)\le2$, then $v$ is incident with at most one $(4,5,5)$-face  by Lemma \ref{lem2}(1) and is not incident with any $(4, 4, 5^-)$-face by Lemma \ref{lem1}(2). By (R1),  $\mu'(v)\geq1-\frac{1}{2}-\frac{5}{12}=\frac{1}{12}>0$. Let $n_3(v)=3$. Then $v$ is incident with two $4^+$-faces. If $v$ is incident with one 4-face, then $G$ has a chord 5-face and so does a chord 7-face and adjacent 4-cycles, contrary to our assumption. Thus, $n_4(v)=0$.   This implies that $n_{5^+}(v)=2$. By Lemmas~\ref{lem2}(1) and \ref{lem1}(2), $v$ is incident with at most one $(4,5,5)$-face and is not incident with any $(4, 4, 5^-)$-face.  Thus $\mu'(v)\geq1-\frac{1}{2}-2\times\frac{5}{12}+2\times\frac{11}{60}=\frac{1}{30}>0$ by (R1) and (R3).

 \item $d(v)=6$. Then $\mu(v)=2$. By Lemma \ref{lem3}, $n_3(v)\le 4$. If $n_3(v)\le3$, then $\mu'(v)\geq2-3\times\frac{7}{12}=\frac{1}{4}>0$ by (R2). Thus, assume that $n_3(v)=4$. In this case, $v$ is incident with two $4^+$-faces. If $v$ is indeed incident one 4-face, then $G$ has a chord 5-cycle and so does a chord 7-cycle and adjacent 4-cycles,  contrary to our assumption. Thus, $n_4(v)=0$. This implies that $n_{5^+}(v)=2$. Thus $\mu'(v)\geq2-4\times\frac{7}{12}+2\times\frac{11}{60}=\frac{1}{30}>0$ by (R2) and (R3).

 \item $d(v)\geq7$. By Lemma \ref{lem3}, $n_3(v)\le\lfloor\frac{2d(v)}{3}\rfloor$. Thus $\mu'(v)\geq d(v)-4-\frac{7}{12}\times\lfloor\frac{2d(v)}{3}\rfloor\geq d(v)-4-\frac{7}{12}\times\frac{2d(v)}{3}=\frac{11}{18}d(v)-4\geq \frac{5}{18}>0$ by (R2).
\end{enumerate}
Then we check $\mu'(f)\geq0$ for all $f\in F(G)$.
\begin{enumerate}
 \item $d(f)=3$.  Then $\mu(f)=-1$. By Lemma \ref{lem1}(2), $v$ is not incident with any $(4, 4, 5^-)$-face. If $f$ is a $(4,5,5)$-face, then $\mu'(f)\geq-1+2\times\frac{1}{2}=0$ by (R1). If $f$ is a $(4,5^+,6^+)$-face, then $\mu'(f)\geq-1+\frac{5}{12}+\frac{7}{12}=0$ by (R1) and (R2). If $f$ is a $(5^+,5^+,5^+)$-face, then $\mu'(f)\geq-1+3\times\frac{1}{3}=0$ by (R1) and (R2).

\item  $d(f)=4$. Since no 4-face is involved in the discharge procedure, $\mu(f)=\mu'(f)=4-4=0$.

\item  $d(f)\geq5$. Then $\mu'(f)\geq d(f)-4-\frac{11}{60}d(f)=\frac{49}{60}d(f)-4\geq\frac{1}{12}>0$ by (R3).
\end{enumerate}
So far, we have proved Property (I). Assume that Property (II) does not hold. This implies that $\mu'(x)=0$ for all $x\in V(G)\cup F(G)$. We observe the above proof and  have each of the following holds.

(a) For each vertex $v\in V(G)$, $d(v)=4$;

(b) For each face $f\in F (G)$, $3\le d(f)\le 4$.\\
By (a), $G$ has no $5^+$-vertices. Thus $G$ is $4$-regular, which is contrary to  Lemma \ref{lem1} (2). This completes the proofs of  Theorem \ref{th0} (1) and (3).

\subsection{Proof of Theorem~\ref{th0}(2)}

In this section, we prove  Theorem~\ref{th0}(2).
Now we define the  discharge rules as follows.
\medskip

\begin{enumerate}[(R1)]
\item  Every 5-vertex sends $\frac{1}{3}$ to each incident $(5^+, 5^+, 5^+)$-face, $\frac{1}{2}$ to each incident $(4,5,5)$-face, $\frac{5}{12}$ to each incident $(4,5,6)$-face and $\frac{61}{150}$ to each incident $(4,5,7^+)$-face.

\item Every $6$-vertex sends $\frac{7}{12}$ to each incident 3-face.

\item Every $7^+$-vertex sends $\frac{89}{150}$ to each incident 3-face.

\item Every $5$-face sends $\frac{11}{60}$ to each incident vertex.

\item Every $6^+$-face sends  $\frac{49}{150}$ to each incident vertex.
\end{enumerate}

\medskip

It suffices to show that the new weight function $\mu'$ satisfies Properties (I) and (II). Note that each $3$-face is not adjacent to $5$-face since $G$ has no chord $6$-cycles.

We first check $\mu'(v)\geq0$ for all $v\in V(G)$. By Lemma \ref{lem1} (1), $d(v)\geq4$.
\begin{enumerate}
\item $d(v)=4$. Since no 4-vertex is involved in the discharge procedure, $\mu'(v)=\mu(v)=4-4=0$.

\item  $d(v)=5$.  By Lemma \ref{lem4}, $n_3(v)\le3$. If $n_3(v)\le2$, then $v$ is incident with at most one $(4,5,5)$-face  by Lemma \ref{lem2}(1) and is not incident with any $(4, 4, 5^-)$-face by Lemma \ref{lem1}(2). By (R1), $\mu'(v)\geq1-\frac{1}{2}-\frac{5}{12}=\frac{1}{12}>0$ by (R1). Thus, assume that $n_3(v)=3$.

Suppose that $v_1, v_2, \ldots, v_5$ are the neighbors of $v$ in clockwise order, and $f_1,  f_2, \ldots , f_5$ are the incident faces of $v$ with $vv_i, vv_{i+1}\in b(f_i)$ for $i=1, 2, \ldots, 5$ where indices are taken modulo 5. By symmetry, there are two cases: either  $f_1,f_2$ and $f_3$ or $f_1, f_2$ and $f_4$ are 3-faces.

In the former case, since $G$ has no chord $6$-cycles, each of $f_4$ and $f_5$ is not a $5$-face. Thus, $n_5(v)=0$. We claim that  at most one of $f_4$ and $f_5$ is a 4-face. Suppose otherwise. Let $f_4=[vv_4xv_5]$ and $f_5=[v_1vv_5y]$. Since $G$ has no chord 6-cycle, $x, y\in \{v, v_1, v_2, v_3, v_4, v_5\}$. Since $G$ is a simple graph, $x\notin \{v, v_2, v_3\}$. Since $n_3(v)=3$, $x\notin \{v_1,v_4,v_5\}$. Similarly, $y\notin\{v, v_5, v_2, v_4, v_1\}$. Thus, $x=v_2$ and $y=v_3$. In this case, $G$ has a chord 6-cycle $vv_4v_3v_1v_2v_5v$, a contradiction.
Thus $n_4(v)\le1$. This implies that $1\le n_{6^+}(v)\le2$. By Lemma \ref{lem2}(1), $v$ is incident with at most one $(4,5,5)$-face. If $v$ is not incident with $(4,5,5)$-face, then $\mu'(v)\geq1-3\times\frac{5}{12}+\frac{49}{150}=\frac{23}{300}>0$ by (R1) and (R5). Thus, assume that $v$ is incident with one $(4,5,5)$-face. By Lemma \ref{lem2}(2), $v$ is  incident with at most one $(4,5,6)$-faces. By (R1) and (R5),
$\mu'(v)\geq1-\frac{1}{2}-\frac{5}{12}-\frac{61}{150}+\frac{49}{150}=\frac{1}{300}>0$.

In the latter case, since $G$ has no chord $6$-cycles,  none of $f_3$ and $f_5$ is a $5$-face. Thus $n_5(v)=0$.
If $f_3=[vv_3xv_4]$ is a $4$-face, then $x\notin\{v, v_2, v_3, v_4, v_5\}$ since $G$ is a simple graph and by Lemma~\ref{lem1}(1). If $x\neq v_1$, then  $vv_2v_3xv_4v_5v$ is a 6-cycle with a chord $vv_3$, a contradiction. If $x=v_1$, then $v_1v_4v_5vv_3v_2v_1$ is a $6$-cycle with a chord $vv_4$, a contradiction. By symmetry, $f_5$ is not a $4$-face. Thus, $n_4(v)=0$.
 This implies that $n_{6^+}(v)=2$. Thus $\mu'(v)\geq 1-3\times\frac{1}{2}+2\times\frac{49}{150}=\frac{23}{150}>0$ by (R1) and (R5).

\item $d(v)=6$. Then $\mu(v)=2$. By Lemma \ref{lem4}, $n_3(v)\le4$. If $n_3(v)\le3$, then $\mu'(v)\geq2-3\times\frac{7}{12}=\frac{1}{4}>0$ by (R2). Thus, assume that $n_3(v)=4$.

We now prove $n_4(v)=n_5(v)=0$.
 Assume that $v_1, v_2, \ldots, v_6$ are the neighbors of $v$ in clockwise order, and $f_1,  f_2, \ldots , f_6$ are the incident faces of $v$ with $vv_i, vv_{i+1}\in b(f_i)$ for $i=1, 2, \ldots, 6$ where indices are taken modulo 6. Since $G$ has no chord $6$-cycles, $n_5(v)=0$ by Lemma~\ref{lem4}. By Lemma~\ref{lem4} and symmetry, we consider two cases: either  $f_1, f_2, f_3, f_5$ or $f_1, f_2, f_4,f_5$ are four 3-faces.

In the former case,
assume that $f_4=[vv_4xv_5]$ is a $4$-face. Since $G$ is a simple graph, $x\notin\{v, v_3, v_4, v_5, v_6\}$ by Lemma~\ref{lem1}(1).
If $x=v_2$, then $v_2v_4v_3vv_6v_5v_2$ is a $6$-cycle with a chord $vv_4$, a contradiction. If $x=v_1$, then $v_1v_2v_3v_4vv_5v_1$ is a $6$-cycle with a chord $vv_3$, a contradiction. If $x\neq v_1$ and $x\neq v_2$, then $xv_5vv_2v_3v_4x$ is a $6$-cycle with a chord $vv_4$, a contradiction. Thus $f_4$ is not a $4$-face. By symmetry, $f_6$ is not a $4$-face. Thus $n_4(v)=0$.

In the latter case,
assume that $f_3=[vv_3xv_4]$ is a $4$-face.  Since $G$ is a simple graph, $x\notin\{v, v_2, v_3, v_4, v_5\}$ by Lemma~\ref{lem1}(1).
If $x=v_1$, then $v_1v_2vv_6v_5v_4v_1$ is a $6$-cycle with a chord $vv_4$, a contradiction. By symmetry, $x\neq v_6$.  If $x\neq v_1$ and $x\neq v_6$, then $v_1v_2v_3xv_4v_5v_1$ is a 6-cycle with a chord $vv_5$, a contradiction. Thus $f_3$ is not a $4$-face. By symmetry, $f_6$ is not a $4$-face. Thus, $n_4(v)=0$.

So far, we have proved that $n_4(v)=n_5(v)=0$. This implies that $n_{6^+}(v)=2$. Thus $\mu'(v)\geq2-4\times\frac{7}{12}+2\times\frac{49}{150}=\frac{8}{25}>0$ by (R2) and (R5).

\item $d(v)\geq7$, then by Lemma \ref{lem4}, $v$ is incident with at most $(d(v)-2)$ 3-faces. Thus $\mu'(v)\geq d(v)-4-\frac{89}{150}(d(v)-2)=\frac{61}{150}d(v)-\frac{422}{150}\geq\frac{1}{30}>0$ by (R3).
\end{enumerate}
Then we check $\mu'(f)\geq0$ for all $f\in F(G)$.
\begin{enumerate}
\item $d(f)=3$.  By Lemma \ref{lem1}(2), $v$ is not incident with any $(4, 4, 4^+)$-face.  If $f$ is a $(5^+, 5^+,5^+)$-face, then $\mu'(f)\geq-1+3\times\frac{1}{3}=0$ by (R1)--(R3). If $f$ is a $(4,5,5)$-face, then $\mu'(f)\geq-1+2\times\frac{1}{2}=0$ by (R1).  If $f$ is a $(4,5^+,6)$-face, then $\mu'(f)\geq-1+\frac{5}{12}+\frac{7}{12}=0$ by (R1)--(R3). If $f$ is a $(4,5^+,7^+)$-face, then $\mu'(f)\geq-1+\frac{61}{150}+\frac{89}{150}=0$ by (R1)--(R3).

\item $d(f)=4$. Since 4-faces are not involved in discharge procedure, $\mu(f)=\mu'(f)=0$.

\item $d(f)=5$. Then $\mu(f)=1$.  By (R4), $\mu'(f)\geq1-5\times\frac{11}{60}=\frac{5}{60}>0$.

\item  $d(f)\geq6$. By (R5), $\mu'(f)\geq d(f)-4-\frac{49}{150}d(f)=\frac{101}{150}d(f)-4\geq\frac{6}{150}>0$.
\end{enumerate}
We have proved Property (I). Assume that Property (II) does not hold. This implies that $\mu'(x)=0$ for all $x\in V(G)\cup F(G)$. We check above proof and obtain the following assertions.

(a) For each vertex $v\in V(G)$, $d(v)=4$;

(b) For each face $f\in F (G)$, $3\le d(f)\le 4$.\\
By (a), $G$ has no $5^+$-vertices. Thus $G$ is $4$-regular, which is contrary to  Lemma \ref{lem1} (2). This completes the proof of  Theorem \ref{th0} (2).

\subsection{Proof of Theorem~\ref{th1}(1)}

In this section, we prove Theorem~\ref{th1}(1).
Now we define the  discharge rules as follows.
\medskip

\begin{enumerate}[(R1)]
\item Every $5^+$-face sends $\frac{1}{3}$ to each incident $3$-vertex.
\end{enumerate}

\medskip

It suffices to show that the new weight function $\mu'$ satisfies Properties (I) and (II).

We first check $\mu'(v)\geq0$ for all $v\in V(G)$. By Lemma \ref{lem5} (1), $d(v)\geq3$.

\begin{enumerate}
\item $d(v)=3$. Then $\mu(v)=3-4=-1$. Since $G$ has no $3$- and $4$-cycles, $v$ is incident with three $5^+$-faces. Thus $\mu'(v)\geq -1+3\times\frac{1}{3}=0$ by (R1).

\item $d(v)=4$. Then $\mu'(v)=\mu(v)=4-4=0$.

\item $d(v)=5$. Then $\mu'(v)=\mu(v)=d(v)-4\geq1>0$.
\end{enumerate}

We further check $\mu'(f)\geq0$ for all $f\in F(G)$. Note that $d(f)\geq5$.

By Lemma \ref{lem5} (2), $f$ is incident with at most $\lfloor\frac{d(f)}{2}\rfloor$ $3$-vertices. Thus $\mu'(f)\geq d(f)-4-\frac{1}{3}\times\lfloor\frac{d(f)}{2}\rfloor\geq \frac{5}{6}d(f)-4\geq\frac{1}{6}>0$.

We have proved Property (I). Assume that Property (II) does not hold. This implies that $\mu'(x)=0$ for all $x\in V(G)\cup F(G)$. Considering above proof, we obtain that
 $G$ has no $5^+$-face and hence every face of $G$ is a $4^-$-face, contrary to our assumption that $G$ has no 3-cycle nor 4-cycle.
 This completes the proof of  Theorem \ref{th1}(1).

\subsection{Proof of Theorem~\ref{th1}(2)}

In this section, we prove Theorem~\ref{th1}(2). Since $G$ has no 6-cycle, each 3-vertex is  incident with at most one 4-face by Lemma~\ref{lem0}. A $3$-vertex $v$ is {\em bad} if $v$ is incident with one $4$-face and {\em good} otherwise.

Now we define the  discharge rules as follows.
\medskip

\begin{enumerate}[(R1)]
\item Every $5^+$-face sends $\frac{1}{3}$ to each incident good $3$-vertex and $\frac{1}{2}$ to each incident bad $3$-vertex.
\end{enumerate}

\medskip

It suffices to show that the new weight function $\mu'$ satisfies Properties (I) and (II). Note that each $4$-face is not adjacent to $4$-face by Lemma \ref{lem0} and \ref{lem5} (1).

We first check $\mu'(v)\geq0$ for all $v\in V(G)$. By Lemma~\ref{lem5}(1), $d(v)\geq3$.
\begin{enumerate}
\item $d(v)=3$.  If $v$ is good, then $v$ is incident with three $5^+$-faces. Thus
 $\mu'(v)\geq -1+3\times\frac{1}{3}=0$ by (R1). If $v$ is bad, then  $v$ is incident with two $5^+$-faces. Thus $\mu'(v)\geq-1+2\times\frac{1}{2}=0$ by (R1).

\item $d(v)=4$.  Since any 4-vertex does not involved in discharge procedure,  $\mu'(v)=\mu(v)=4-4=0$.

\item $d(v)\geq5$. Then $\mu(v)=d(v)-4$. Since any 5-vertex does not involved in discharge procedure,  $\mu'(v)=\mu(v)\geq1>0$.
\end{enumerate}
We further check $\mu'(f)\geq0$ for all $f\in F(G)$. Note that $d(f)\geq4$ and $d(f)\neq6$.
\begin{enumerate}
\item $d(f)=4$.  Since any 4-face does not involved in discharge procedure, $\mu'(f)=\mu(f)=4-4=0$.

\item $d(f)=5$. Then $\mu(f)=5-4=1$.
By Lemma \ref{lem5}(2), $f$ is incident with at most two $3$-vertices. If $v$ is incident with at most one $3$-vertex, then $\mu'(f)\geq1-\frac{1}{2}=\frac{1}{2}>0$ by (R1). Let $v$ be incident with two $3$-vertices $v_1$ and $v_2$. If one of $v_1$ and $v_2$  is  bad  and the other is good, then $\mu'(v)\geq1-\frac{1}{2}-\frac{1}{3}=\frac{1}{6}>0$ by (R1). If both $v_1$ and $v_2$ are bad $3$-vertices, then $\mu'(v)\geq1-2\times\frac{1}{2}=0$ by (R1).

 \item $d(f)\geq7$. By Lemma \ref{lem5}(2), $f$ is incident with at most $\lfloor\frac{d(f)}{2}\rfloor$ $3$-vertices. Thus $\mu'(v)\geq d(f)-4-\frac{1}{2}\times\lfloor\frac{d(f)}{2}\rfloor\geq \frac{3}{4}d(f)-4\geq\frac{5}{4}>0$.
\end{enumerate}
 We have proved Property (I). Assume that Property (II) does not hold. This implies that $\mu'(x)=0$ for all $x\in V(G)\cup F(G)$. Considering above proof, we establish the following claims.

\n{\bf Claim 1.} Each of the following holds.

(1) For each vertex $v\in V(G)$, $3\le d(v)\le 4$.

(2) For each face $f\in F(G)$, $f$ is either a $5$-face incident with two bad 3-vertices or a $4$-face.

\medskip
\n{\bf Claim 2.} Let $f$ be a 5-face. Then each of the following holds.

(1) $f$ is
not incident with three consecutively adjacent $4$-vertices.

(2) If $f$ is adjacent to a 4-face $g$, then $g$ is a $(3, 4, 4, 4)$-face.

\n{\bf Proof of Claim 2.} (1) By Claim 1(2), $f$ is incident two bad 3-vertices. By Lemma \ref{lem5}(2), $f$ is a $(4,3,4,3,4)$-face and hence $f$ is
not incident with three consecutively adjacent $4$-vertices.

(2) it follows by Lemma \ref{lem6}. $\blacksquare$

\medskip

  \n{\bf Claim 3.}
   $G$ has a 5-face.

\n{\bf Proof of Claim 3.}
Suppose otherwise  that $G$ has no $5$-face. By Claim 1(2), $G$ has only 4-faces. If $G$ has more than one 4-face, then $G$ contains two adjacent 4-faces, contrary to  Lemma \ref{lem0}. $\blacksquare$

\medskip

By Claims 1(2) and 3, we assume that $G$ has a 5-face $f=[v_1v_2v_3v_4v_5]$ incident with two bad 3-vertices. By Claim 2(1) and by symmetry, assume that $v_1$ and $v_3$ are two $3$-vertices. Thus, $v_1$ and $v_3$ are incident with one $4$-face and two $5$-faces. Assume that $v_1$ is incident with $f, f_1, f_2$ and $v_3$ is incident with $f, f_3, f_4$ where $f_1,f_3$ are two $4$-faces and $f_2, f_4$ are two $5$-faces. By Lemma~\ref{lem6}, $f_1$ and $f_3$ are two $(3,4,4,4)$-faces. We observe $f_1$ and consider  two following cases.

\n{\bf Case 1.} $v_1v_2\in b(f_1)$.

Let $f_1=[v_1v_2v_6v_7]$. We first claim that $v_6, v_7\notin \{v_1, \ldots, v_5\}$.
Note that  $v_6\notin \{v_1, v_2,v_3\}$ and $v_7\notin \{v_1, v_2, v_5\}$.  Since $G$ has no $3$-cycle, $v_6\notin\{v_4, v_5\}$ and $v_7\notin\{v_3, v_4\}$.
In this case, we consider two cases for $f_3$: either $v_2v_3\in b(f_3)$ or $v_3v_4\in b(f_3)$.

In the former case, let $f_3=[v_2v_3v_8v_9]$. We claim that $v_8, v_9\notin \{v_1, \ldots, v_7\}$.
Since $d(v_1)=3$, $v_8\neq v_1$ and $v_9\neq v_1$. Clearly, $v_8\notin \{ v_2, v_3, v_4\}$ and $v_9\notin \{v_2, v_3, v_6\}$ .  Since $G$ has no $3$-cycle,  $v_8\notin\{v_5, v_6\}$ and $v_9\notin\{v_4, v_5, v_7\}$.
 If $v_8=v_7$, then $G$ contains Configuration (5) in Fig.1, contrary to Lemma \ref{lem7}. Thus,
$v_8, v_9\notin \{v_1, \ldots, v_7\}$.

Let $v_2$ be incident with $f, f_1, f_5, f_3$ in clockwise order. By Lemma \ref{lem0}, $f_5$ is a $5$-face. By Claim 2(2), $d(v_2)=d(v_6)=d(v_9)=4$. Therefore, $f_5$ is incident with three consecutively adjacent $4$-vertices which is contrary to Claim 2(1).

In the latter case, let $f_3=[v_4v_3v_9v_8]$. We first claim that $v_8, v_9\notin\{v_1, \ldots, v_7\}$.
 Recall that $v_1$ is a $3$-vertex, $v_8\neq v_1$ and $v_9\neq v_1$. Obviously, $v_8\notin\{v_3, v_4, v_5\}$ and $v_9\notin\{v_2, v_3, v_4\}$ since $G$ is simple. Since $G$ has no $3$-cycle, $v_8\neq v_2$ and $v_9\notin\{v_5, v_6\}$.
If $v_8=v_6$, then $v_4v_3v_2v_1v_7v_6v_4$ is a $6$-cycle, a contradiction. Thus $v_8, v_9\notin\{v_1, \ldots, v_6\}$.
If $v_8=v_7$, then $v_9\notin\{v_1, \ldots, v_7\}$ and $v_7v_1v_5v_4v_3v_9v_7$ is a $6$-cycle, a contradiction. If $v_9=v_7$, then $v_8\notin\{v_1, \ldots, v_7\}$ and $v_7v_6v_2v_3v_4v_8v_7$ is a $6$-cycle, a contradiction.
Thus, $v_8, v_9\notin\{v_1,\ldots, v_7\}$.

 Let $v_2$ be incident with $f, f_1, f_5, f_4$ in clockwise order. By Lemma \ref{lem0}, $f_4$ is a $5$-face.
 Let $f_4=[v_2v_3v_9v_{10}v_{11}]$. We first assume that $v_{10}, v_{11}\notin\{v_1, \ldots, v_9\}$. In this case, $d(v_2)=d(v_9)=4$ and $d(v_3)=4$. By Claim 1(2), only one vertex in $\{v_{10}, v_{11}\}$ is a $3$-vertex.
Since $G$ does not contain Configuration (7) of Fig.1 in  Lemma \ref{lem7},  $d(v_{11})=4$ and $d(v_{10})=3$. By Lemma \ref{lem0} and
Claim 1 (2), $f_5$ is a 5-face. Moreover, $f_5$ is incident with three consecutively adjacent $4$-vertices $v_6, v_2$ and $v_{11}$, contrary to Claim 2 (1). Thus, assume that  $v_{10}\in\{v_1, \ldots, v_9\}$ or $v_{11}\in\{v_1, \ldots, v_9\}$.

Since $v_1$ is a $3$-vertex, $v_{10}\neq v_1$ and $v_{11}\neq v_1$.  Obviously, $v_{10}\notin\{v_2, v_3,v_8, v_9\}$ and $v_{11}\notin\{v_2, v_3, v_6\}$ since $G$ is simple.
Since $G$ has no $3$-cycle, $v_{10}\neq v_4$ and $v_{11}\notin\{v_4, v_5, v_7, v_9\}$.
 If $v_{10}=v_6$, then $v_1v_2v_3v_9v_6v_7v_1$ is a $6$-cycle, a contradiction. If $v_{10}=v_7$, then $v_7v_9v_3v_4v_5v_1v_7$ is a $6$-cycle, a contradiction. Thus $v_{10}\notin\{v_1, \ldots, v_4, v_6,\ldots, v_9\}$
and $v_{11}\notin\{v_1,\ldots, v_7,v_9\}$. Let $v_{10}=v_5$.
 If $v_{11}=v_8$, then $v_5v_4v_8v_5$ is a $3$-cycle, a contradiction. If $v_{11}\notin\{v_1, \ldots, v_9\}$, then $v_1v_5v_{11}v_2v_6v_7v_1$ is a $6$-cycle, a contradiction. If $v_{10}\notin\{v_1, \ldots, v_9\}$, then $v_{11}=v_8$ and $v_8v_9v_{10}v_8$ is a $3$-cycle, a contradiction.

\n{\bf Case 2.} $v_1v_5\in b(f_1)$.

Let $f_1=[v_1v_5v_6v_7]$. We first claim that $v_6, v_7\notin\{v_1, \ldots, v_5\}$.
Obviously, $v_6\notin\{v_1, v_4, v_5\}$ and $v_7\notin\{v_1, v_2, v_5\}$. Since $G$ has no $3$-cycle, $v_6\notin\{v_2, v_3\}$ and $v_7\notin\{v_3, v_4\}$.
In this case, we consider two cases of $f_3$: either $v_2v_3\in b(f_3)$ or $v_3v_4\in b(f_3)$.

In the former case, let $f_3=[v_2v_3v_8v_9]$. We claim that $v_8, v_9\notin\{ v_1, \ldots, v_7\}$. Since $v_1$ is a $3$-vertex, $v_8\neq v_1$ and $v_9\neq v_1$.
 Obviously, $v_8\notin\{ v_2, v_3, v_4\}$ and $v_9\notin\{v_2, v_3\}$ since $G$ is simple. Since $G$ has no $3$-cycle, $v_8\neq v_5$ and $v_9\notin\{v_4, v_5, v_7\}$.
If $v_8=v_7$, then $v_7v_6v_5v_1v_2v_3v_7$ is a $6$-cycle, a contradiction.
 Thus $v_8, v_9\notin\{v_1, \ldots, v_5, v_7\}$.
 If $v_8=v_6$, then $v_9\not=v_6$. In this case, $v_6v_5v_4v_3v_2v_9v_6$ is a $6$-cycle, a contradiction. If $v_9=v_6$, then $v_8\not=v_6$. In this case,  $v_6v_5v_1v_2v_3v_8v_6$ is a $6$-cycle, a contradiction.
 So far, we have proved that $v_8, v_9\notin\{v_1, \ldots, v_7\}$.

 Assume that $v_3$ is incident with $f, f_3$ and $f_4$ in clockwise order. Since $G$ has no $6$-cycle, by Lemma \ref{lem0}, $f_4$ is a $5$-face. Let $f_4=[v_4v_3v_8v_{10}v_{11}]$. If $v_{10}, v_{11}\notin \{v_1, \ldots, v_9\}$, then $G$ contains Configuration (1) or (2) in Fig.1, contrary to Lemma \ref{lem7}. Thus,  assume that $v_{10}\in\{v_1,\ldots, v_9\}$ or $v_{11}\in\{v_1, \ldots, v_9\}$. Since $v_1$ is a $3$-vertex, $v_{10}\neq v_1$ and $v_{11}\neq v_1$. Obviously, $v_{10}\notin\{v_3, v_8, v_9\}$ and $v_{11}\notin\{v_3, v_4, v_5\}$ since $G$ is simple. Since $G$ has no $3$-cycle, $v_{10}\notin \{v_2, v_4\}$ and $v_{11}\notin\{v_2, v_6, v_8\}$.
  If $v_{10}=v_5$, then $v_8v_5v_4v_3v_2v_9v_8$ is a $6$-cycle, a contradiction. If $v_{10}=v_6$, then $v_6v_7v_1v_2v_9v_8v_6$ is a $6$-cycle, a contradiction.
 If $v_{10}=v_7$, then $v_8v_7v_1v_5v_4v_3v_8$ is a $6$-cycle, a contradiction. Thus $v_{10}\notin\{v_1, \ldots, v_9\}$.
  If $v_{11}=v_7$, then $v_7v_1v_2v_9v_8v_{10}v_7$ is a $6$-cycle, a contradiction. If $v_{11}=v_9$, then $v_8v_9v_{10}v_8$ is a $3$-cycle, a contradiction.

In the latter case, let $f_3=[v_3v_4v_8v_9]$. We claim that $v_8, v_9\notin\{v_1, \ldots, v_7\}$. Since $v_1$ is a $3$-vertex, $v_9\neq v_1$ and $v_8\neq v_1$.
Since $G$ is  simple, $v_9\notin\{v_2, v_3, v_4\}$ and $v_8\notin\{v_3, v_4, v_5\}$ by Lemma~\ref{lem5}(1). Since $G$ has no $3$-cycle, $v_9\neq v_5$ and $v_8\notin\{ v_2, v_6\}$.
 If $v_9=v_6$, then $v_6v_7v_1v_5v_4v_3v_6$ is a $6$-cycle, a contradiction. If $v_9=v_7$, then $v_5v_6v_7v_3v_2v_1v_5$ is a $6$-cycle, a contradiction. Thus $v_9\notin\{v_1, \ldots, v_7\}$.
If $v_8=v_7$, then $v_7v_6v_5v_4v_3v_9v_7$ is a $6$-cycle, a contradiction. 

Since $G$ has no $6$-cycle, by Lemma \ref{lem0}, $f_4$ is a $5$-face. Let $f_4=[v_2v_3v_9v_{10}v_{11}]$.  If $v_{10}, v_{11}\notin \{v_1, \ldots, v_9\}$, then $G$ contains Configuration (3) or (4) of Fig.1, contrary to  Lemma \ref{lem7}.
Thus, assume that either  $v_{10}\in\{v_1, \ldots,v_9\}$ or $v_{11}\in\{v_1, \ldots, v_9\}$. Since $v_1$ is a $3$-vertex, $v_{10}\neq v_1$. Since $G$ is simple and by Lemma~\ref{lem5}(1),
$v_{10}\notin\{v_3, v_8, v_9\}$ and $v_{11}\notin\{v_1, v_2, v_3\}$ . Since $G$ has no $3$-cycles, $v_{10}\notin \{v_2, v_4\}$ and $v_{11}\notin\{v_4,v_5, v_7, v_9\}$.
 If $v_{10}=v_6$, then $v_6v_7v_1v_2v_3v_9v_6$ is a $6$-cycle, a contradiction. If $v_{10}=v_7$, then $v_9v_8v_4v_5v_6v_7v_9$ is a $6$-cycle, a contradiction. Thus $v_{10}\notin\{v_1, \ldots, v_4, v_6,\ldots, v_9\}$ and  $v_{11}\notin\{v_1, \ldots, v_5, v_7, v_9\}$.
Assume that $v_{11}=v_6$.  If $v_{10}=v_5$, then $G$ contains Configuration (6) of Fig.1, contrary to Lemma \ref{lem7}. Thus, $v_{10}\not=v_5$. So,
 $v_{10}\notin\{v_1, \ldots, v_9\}$. In this case,  $v_{10}v_6v_5v_4v_8v_9v_{10}$ is a $6$-cycle, a contradiction.
Thus, assume that $v_{11}=v_8$. If $v_{10}\notin\{v_1, \ldots, v_9\}$, then $v_8v_9v_{10}v_8$ is a $3$-cycle, a contradiction. If $v_{10}=v_5$, then $v_5v_4v_8v_5$ is a $3$-cycle, a contradiction. Thus $v_{11}\notin\{v_1, \ldots, v_9\}$. If $v_{10}=v_5$, then $v_5v_6v_7v_1v_2v_{11}v_5$ is a $6$-cycle, a contradiction.
 $\Box$

This implies that $G$ is not existence. We have proved Property (II). This completes the proof of  Theorem \ref{th1}(2).

\subsection{Proof of Theorem~\ref{th1}(3)}

In this section, we prove  Theorem \ref{th1}(3).  A $3$-vertex $v$ is {\em bad} if $v$ is incident with one $3$-face and {\em good} otherwise.

Now we define the  discharge rules as follows.
\medskip

\begin{enumerate}[(R1)]
\item Every $5^+$-face sends $\frac{1}{3}$ to each incident good $3$-vertex, $\frac{1}{2}$ to each incident bad $3$-vertex and $\frac{1}{3}$ to each incident 3-face.
\end{enumerate}

\medskip

It suffices to show that the new weight function $\mu'$ satisfies Properties (I) and (II). 
We first check $\mu'(v)\geq0$ for all $v\in V(G)$. By Lemma \ref{lem5} (1), $d(v)\geq3$.
\begin{enumerate}
\item  $d(v)=3$. If $v$ is  bad, then $v$ is incident with two $7^+$-faces. By (R1),
$\mu'(v)\geq-1+2\times\frac{1}{2}=0$. If $v$ is good, then $v$ is incident with three $5^+$-faces. By (R1),
$\mu'(v)\geq-1+3\times\frac{1}{3}=0$.

\item $d(v)=4$. Since no 4-vertex is involved in discharge procedure,  $\mu'(v)=\mu(v)=4-4=0$.

\item $d(v)\geq5$. By (R1), $\mu'(v)=\mu(v)=d(v)-4\geq1>0$.
\end{enumerate}
Further we check $\mu'(f)\geq0$ for all $f\in F(G)$.
\begin{enumerate}
\item  $d(f)=3$. Then $\mu(f)=-1$. Since $G$ has no 4-cycle, each face adjacent to $f$ is a $5^+$-face. By (R1), $\mu'(v)\geq-1+3\times\frac{1}{3}=0$.

\item  $d(f)=5$. Then $\mu(f)=1$. Since $G$ has no $6$-cycle,  $f$ is not adjacent to $3$-face. By Lemma~\ref{lem5}(2), $f$ is adjacent to at most two  3-vertices. Since $G$ has no $6$-cycles, no $3$-cycle is not adjacent to any $5$-cycle. Thus, $f$ is adjacent to at most two good 3-vertices.  By (R1), $\mu'(f)\geq 1-2\times\frac{1}{3}=\frac{1}{3}>0$.

\item  $d(f)\geq7$.  Let $f$ be incident with $m$ 3-vertices. Since $G$ has no $4$-cycles, no $3$-face is  adjacent to any $3$-face. Then  $v$ is incident with at most $d(f)-m$ 3-faces.
By Lemma \ref{lem5} (2), $m\leq\lfloor\frac{d(f)}{2}\rfloor$. Thus $\mu'(f)\geq d(f)-4-\frac{m}{2}-\frac{1}{3}(d(f)-m)=\frac{2}{3}d(f)-\frac{1}{6}m-4\geq\frac{7}{12}d(f)-4\geq\frac{1}{12}>0$ by (R1).
\end{enumerate}
So far, we have proved Property (I). Assume that Property (II) does not hold. This implies that $\mu'(x)=0$ for all $x\in V(G)\cup F(G)$. Observing above proof, we obtain the following statements.

(a) For each vertex $v\in V(G)$, $3\le d(v)\le4$;

(b) For each face $f\in F (G)$, $d(f)=3$.\\
By (b), $G$ is  one $3$-cycle $[uvw]$. Clearly, $G$ has a matching $M=\{uv\}$ such that $G-M$ is $(2, 1)$ decomposable,  a contradiction. This completes the proof of  Theorem \ref{th1}(3).

\small

\end{document}